\newcommand{\re}{\mathbb{R}}
\newcommand{\N}{\mathbb{N}}
\newcommand{\half}{\frac{1}{2}}
\newcommand{\lmd}{\lambda}
\newcommand{\eps}{\epsilon}
\newcommand{\dt}{\delta}
\def\af{\alpha}
\def\bt{\beta}
\def\rank{\mbox{rank}}
\newcommand{\sig}{\sigma}
\newcommand{\Sig}{\Sigma}
\newcommand{\reff}[1]{(\ref{#1})}
\newcommand{\mc}[1]{\mathcal{#1}}
\newcommand{\bdes}{\begin{description}}
\newcommand{\edes}{\end{description}}
\newcommand{\bal}{\begin{align}}
\newcommand{\eal}{\end{align}}
\newcommand{\bnum}{\begin{enumerate}}
\newcommand{\enum}{\end{enumerate}}
\newcommand{\bit}{\begin{itemize}}
\newcommand{\eit}{\end{itemize}}
\newcommand{\bea}{\begin{eqnarray}}
\newcommand{\eea}{\end{eqnarray}}
\newcommand{\be}{\begin{equation}}
\newcommand{\ee}{\end{equation}}
\newcommand{\baray}{\begin{array}}
\newcommand{\earay}{\end{array}}
\newcommand{\bsry}{\begin{subarray}}
\newcommand{\esry}{\end{subarray}}
\newcommand{\bca}{\begin{cases}}
\newcommand{\eca}{\end{cases}}
\newcommand{\bcen}{\begin{center}}
\newcommand{\ecen}{\end{center}}
\newcommand{\bbm}{\begin{bmatrix}}
\newcommand{\ebm}{\end{bmatrix}}
\newcommand{\bmx}{\begin{matrix}}
\newcommand{\emx}{\end{matrix}}
\newcommand{\bpm}{\begin{pmatrix}}
\newcommand{\epm}{\end{pmatrix}}
\newcommand{\btab}{\begin{tabular}}
\newcommand{\etab}{\end{tabular}}
\newtheorem{theorem}{Theorem}[section]
\newtheorem{pro}[theorem]{Proposition}
\newtheorem{lem}[theorem]{Lemma}
\newtheorem{cor}[theorem]{Corollary}
\newtheorem{ass}[theorem]{Assumption}
\theoremstyle{definition}
\newtheorem{exm}[theorem]{Example}
\newtheorem{remark}[theorem]{Remark}
\begin{document}

\title[Flat Truncation and Lasserre's Hierarchy]
{Certifying Convergence of Lasserre's Hierarchy via Flat Truncation}

\author[Jiawang Nie]{Jiawang Nie}
\address{Department of Mathematics\\
  University of California \\
  San Diego}
\email{njw@math.ucsd.edu}

\begin{abstract}
Consider the optimization problem of minimizing a polynomial function
subject to polynomial constraints.
A typical approach for solving it globally is applying Lasserre's hierarchy of
semidefinite relaxations, based on either
Putinar's or Schm\"{u}dgen's Positivstellensatz.
A practical question in applications is: how to certify its
convergence and get minimizers?
In this paper, we propose {\it flat truncation} as a certificate
for this purpose. Assume the set of global minimizers
is nonempty and finite. Our main results are:
i) Putinar type Lasserre's hierarchy has finite convergence
if and only if flat truncation holds, under some generic assumptions;
the same conclusion holds for the Schm\"{u}dgen type one
under weaker assumptions.
ii) Flat truncation is asymptotically satisfied
for Putinar type Lasserre's hierarchy if the archimedean condition holds;
the same conclusion holds for the Schm\"{u}dgen type one
if the feasible set is compact.
iii) We show that flat truncation can be used as a certificate
to check exactness of standard SOS relaxations and Jacobian SDP relaxations.
\end{abstract}

\keywords{flat truncation, Lasserre's relaxations, quadratic module, preordering,
semidefinite programming}

\subjclass{ 65K05, 90C22}

\maketitle

\section{Introduction}

Given polynomials $f,g_1,\ldots,g_m$, consider the optimization problem
\be \label{pop:K}
\left\{\baray{rl}
\underset{x\in \re^n}{\min} &  f(x)\\
\mbox{s.t.}& g_1(x)\geq 0, \ldots, g_m(x) \geq 0.
\earay \right.
\ee
Let $K$ be its feasible set, and $f_{min}$ be its global minimum value.
The $k$-th Lasserre's relaxation \cite{Las01} for solving \reff{pop:K} is
($k$ is also called the {\it relaxation order})
\be \label{las-rex:put}
\max \quad \gamma \quad
\mbox{ s.t. }  f - \gamma \in Q_k(g).
\ee
Here, the set $Q_k(g)$ denotes the $k$-th truncated
quadratic module generated by the tuple $g:=(g_1,\ldots,g_m)$
(for convenience, denote $g_0 = 1$
and say a polynomial is SOS if it is a sum of squares of
polynomials with real coefficients):
\[
Q_k(g) := \left\{ \left.\sum_{i=0}^m g_i \sig_i \right|
\sig_i \mbox{ is SOS}, \, \deg(g_i\sig_i) \leq 2k \mbox{ for every } i
\right\}.
\]
The relaxation~\reff{las-rex:put} is equivalent to a semidefinite program (SDP),
and thus could be solved efficiently by numerical methods
like interior point algorithms.
Let $f_k$ be the optimal value of \reff{las-rex:put} for a given order $k$.
Clearly, every $f_k  \leq f_{min}$
and the sequence $\{f_k\}$ is monotonically increasing.
Under the archimedean condition (i.e., there exists $\phi \in Q_\ell(g)$ for some $\ell$
such that the inequality $\phi(x)\geq 0$ defines a compact set in $x$),
Lasserre proved the convergence $f_k \to f_{min}$ as $k \to \infty$ \cite{Las01}.
An estimation of its convergence rate is given in \cite{NieSwg}.
The sequence of \reff{las-rex:put} as $k\to\infty$
is called {\it Lasserre's hierarchy} in the literature.
As demonstrated by extensive numerical experiments,
it occurs quite a lot that $f_k = f_{min}$ for a finite order $k$ in applications.
If this happens, we say Lasserre's hierarchy has {\it finite convergence}.
This raises a very practical question: since $f_{min}$ is typically unknown,
how do we certify its finite convergence if it happens?
If it is certified, how do we get minimizers?
A frequently used sufficient condition for this purpose is {\it flat extension}
introduced by Curto and Fialkow (cf. \cite{CF05}),
but it is a strong condition that might not be satisfied,
i.e., it is not necessary. To the author's best knowledge,
there is very little work on proving a certificate
for checking finite convergence of Lasserre's hierarchy,
and the question is almost completely open.
The motivation of this paper is to address this issue.
Our main result is that a more suitable condition called {\it flat truncation}
could generically serve as such a certificate.

\subsection{Background}

Typically, to extract a global minimizer, one needs to consider
the dual optimization problem of \reff{las-rex:put}
whose description uses localizing matrices. Define degree integers
\be \label{dfdid}
d_i = \lceil \deg(g_i)/2 \rceil, \quad
d_g = \max\{1, d_1,\ldots, d_m\}, \quad
d_f = \lceil \deg(f)/2 \rceil.
\ee
(Here $\lceil a \rceil$ denotes the smallest integer that
is greater than or equal to $a$.)
Let $y$ be a sequence indexed by $\af:=(\af_1,\ldots,\af_n)\in \N^n$
($\N$ is the set of nonnegative integers)
with $|\af| := \af_1+\cdots+\af_n \leq 2k$,
i.e., $y$ is a {\it truncated moment sequence (tms)} of degree $2k$.
Denote by $\mathscr{M}_{2k}$ the space of all tms whose degrees are $2k$.
A tms $y \in \mathscr{M}_{2k}$ defines a Riesz functional $\mathscr{L}_y$ on $\re[x]_{2k}$
(the space of real polynomials in $x:=(x_1,\ldots,x_n)$
with degrees at most $2k$, and denote $\re[x]:=\sum_i \re[x]_i$) as
\[
\mathscr{L}_y\left(\sum_\af
p_\af x_1^{\af_1}\cdots x_n^{\af_n} \right) := \sum_\af p_\af y_\af.
\]
For convenience, denote $x^\af := x_1^{\af_1}\cdots x_n^{\af_n}$ and
\[
\langle p, y \rangle := \mathscr{L}_y(p).
\]
The $k$-th {\it localizing matrix} $L_{h}^{(k)}(y)$
generated by a polynomial $h$ and a tms $y \in \mathscr{M}_{2k}$
is a symmetric matrix satisfying (denote $d_h := \lceil \deg(h)/2 \rceil$)
\[
p^T L_{h}^{(k)}(y) q  := \mathscr{L}_y(h pq)
\quad \forall\, p, q \in \re[x]_{k-d_h}.
\]
(Here, for convenience,  we still use $p$ to
denote the coefficient vector of the polynomial $p$.)
When $h = 1$, $L_{h}^{(k)}(y)$ is
called a {\it moment matrix} and is denoted as
\[
M_k(y):= L_{1}^{(k)}(y).
\]
The columns and rows of $L_{h}^{(k)}(y)$, as well as $M_k(y)$,
are indexed by integral vectors $\af \in \N^n$ with $|\af|\leq k-d_h$.

The dual optimization problem of \reff{las-rex:put} is (cf. \cite{Las01,LasBok})
\be \label{mom-las:put}
\left\{\baray{rl}
\underset{ y \in \mathscr{M}_{2k} }{\min} &  \langle f, y \rangle \\
\mbox{s.t.}& L_{g_i}^{(k)}(y) \succeq 0 \,(i=0,1,\ldots, m), \,
\langle 1, y \rangle = 1.
\earay \right.
\ee
In the above, $X\succeq 0$ means a matrix $X$ is positive semidefinite.
Let $f_k^*$ be the optimal value of \reff{mom-las:put} for order $k$.
By weak duality, $f_k^* \leq f_k$ for every $k$.
If $K$ has nonempty interior, then \reff{mom-las:put} has an interior point,
\reff{las-rex:put} achieves its optimal value
and $f_k^* = f_k$, i.e., there is no duality gap between
\reff{las-rex:put} and \reff{mom-las:put} (cf. \cite{Las01}).
Clearly, every $f_k^* \leq f_{min}$, and the sequence
$\{f_k^*\}$ is also monotonically increasing.
We refer to Lasserre's book \cite{LasBok} and
Laurent's survey \cite{Lau} for related work in this area.

Suppose $y^*$ is an optimizer of \reff{mom-las:put}.
If the {\it flat extension} condition (cf. \cite{CF05})
\be \label{con:fec}
\rank \, M_{k-d_g}(y^*) \quad = \quad \rank \, M_{k}(y^*)
\ee
holds ($d_g$ is from \reff{dfdid}),
then we can extract $r=\rank \,M_k(y^*)$ global optimizers for \reff{pop:K}.
By Theorem~1.1 of Curto and Fialkow \cite{CF05},
if $y^*$ is feasible for \reff{mom-las:put} and \reff{con:fec} is satisfied,
then $y^*$ admits a unique $r$-atomic measure supported on $K$,
i.e., there exist $(\lmd_1, \ldots, \lmd_r)>0$
and $r$ distinct points $v_1,\ldots, v_r \in K$ such that
\be \label{r-atom:y*}
y^* \quad = \quad  \lmd_1 [v_1]_{2k} + \cdots + \lmd_r [v_r]_{2k}.
\ee
Here, for $x\in \re^n$, $[x]_{2k}$ denotes the vector defined as
\[
[x]_{2k} := \bbm 1 & x_1 &\cdots & x_n & x_1^2 & x_1x_2 & \cdots & x_n^{2k}\ebm^T.
\]
In \reff{r-atom:y*}, the constraint $\langle 1, y^* \rangle = 1$ implies
$\lmd_1 + \cdots + \lmd_r =1$. If \reff{con:fec} holds, then
$f_k=f_{min}$ (suppose there is no duality gap
between \reff{las-rex:put} and \reff{mom-las:put}),
all $v_1,\ldots, v_r$ are global minimizers of \reff{pop:K},
and they could typically be obtained by solving some SVD and eigenvalue problems,
as shown by Henrion and Lasserre \cite{HenLas05}.
Generally, \reff{con:fec} is a sufficient but not necessary condition
for checking finite convergence of Lasserre's hierarchy.

For a tms $z\in \mathscr{M}_{2t}$, we say $z$ is {\it flat with respect to $g$} if
$z$ is feasible in \reff{mom-las:put} for $k=t$ and satisfies the condition
$\rank \, M_{t-d_g}(z) = \rank \, M_{t}(z)$ (cf. \cite{CF05}).
If the tuple $g$ is clear in the context, we just simply say $z$ is flat.

\subsection{Flat truncation}
\label{subsec:flat}

To get a minimizer of \reff{pop:K} from an optimizer $y^*$ of \reff{mom-las:put},
the flat extension condition \reff{con:fec} would be weakened.
For instance, if $y^*$ has a {\it flat truncation}, i.e.,
there exists an integer $ t\in [\max\{d_f, d_g\},k]$ such that
\be \label{cd:flat-trun}
\rank \, M_{t-d_g}(y^*) \quad = \quad \rank \, M_{t}(y^*),
\ee
then we could still extract $r=\rank M_{t}(y^*)$ minimizers for \reff{pop:K}.
Let $z=y^*|_{2t}$ (denote $y^*|_{2t}:=(y^*_\af)_{|\af| \leq 2t}$) be the truncation.
Then, the tms $z$ is flat, and there exist $r$ distinct points $u_1,\ldots, u_r \in K$
and scalars $\lmd_1,\ldots,\lmd_r$ such that
\[
z = \lmd_1 [u_1]_{2t} + \cdots + \lmd_r [u_r]_{2t},
\]
\[
\lmd_1 >0,\ldots,\lmd_r>0, \quad \lmd_1+\cdots+\lmd_r = 1.
\]
Clearly, $\langle f, z\rangle = \langle f, y^* \rangle = f_k^*
\leq f_{min}$ (note $2t \geq \deg(f)$) and
\[
\lmd_1 f(u_1) + \cdots + \lmd_r f(u_r) = \langle f, z\rangle =
\langle f, y^* \rangle   \leq f_{min}.
\]
Since every $f(u_i) \geq f_{min}$, each $u_i$ must be a global minimizer
of \reff{pop:K}. Hence,
if an optimizer of \reff{mom-las:put} has a flat truncation,
then $f_k^* = f_{min}$, and $f_k = f_{min}$ if there is no
duality gap between \reff{las-rex:put} and \reff{mom-las:put}.

The flat extension condition \reff{con:fec}
and flat truncation condition \reff{cd:flat-trun}
are different in checking convergence of Lasserre's hierarchy.
We show the difference in the following example.

\begin{exm} \label{emp:FTneFE}
Consider the univariate cubic optimization problem
\[
\min_{x\in \re} \quad x^3 \quad \mbox{s.t.} \quad x \geq 0, 1-x\geq 0.
\]
The feasible set is compact and has interior points,
and the archimedean condition holds. The global minimum $f_{min}=0$
and the origin is the unique global minimizer.
It could be easily seen that $f_k^*=f_k=0$ for all $k\geq 2$.
So, its Lasserre's hierarchy has finite convergence.
Both \reff{las-rex:put} and \reff{mom-las:put} achieve their optimal values.
However, the flat extension condition \reff{con:fec}
is violated for almost all optimizers of \reff{mom-las:put},
while the flat truncation condition \reff{cd:flat-trun}
is satisfied for all the optimizers.
To see this, let $y^*$ be an optimizer of \reff{mom-las:put},
then $y^*_3 = 0$. It could be easily shown that
the constraints of \reff{mom-las:put} implies
that $y^*_\af = 0$ for all $0<|\af|\leq 2k-1$.
(For instance, for $k=2$, its constraints are
\[
M_2(y) = \bbm  1 & y_1 & y_2  \\ y_1 & y_2  & y_3 \\
y_2 & y_3 & y_4   \ebm \succeq 0, \qquad
L_{x}^{(2)}(y) = \bbm  y_1 & y_2  \\ y_2  & y_3  \ebm \succeq 0,
\]
\[
L_{1-x}^{(3)}(y) = \bbm  1-y_1 & y_1-y_2  \\  y_1-y_2  & y_2-y_3  \ebm \succeq 0,
\]
and $y^*_3=0$ implies $y^*_1=y^*_2=0$.)
Thus, for each $k\geq 2$, every optimizer of \reff{mom-las:put} is in the form
\[
y^*(\eps):=(1, 0, \ldots, 0, \eps)
\]
with a paramter $\eps \geq 0$. If \reff{mom-las:put} is solved by
interior-point methods, then typically a value $\eps >0$ is returned.
For instance, for $k=2$, a numerical value $0.7908$ of $\eps$
is returned when \reff{mom-las:put} is solved by the SDP package {\tt SeDuMi} \cite{sedumi}.
Clearly, \reff{con:fec} is violated for $y^*(\eps)$ with $\eps>0$,
while \reff{cd:flat-trun} is always satisfied for $y^*(\eps)$
with every $\eps\geq 0$ and $1\leq t \leq k-1$.
This shows that flat truncation is a more suitable condition
than flat extension in certifying convergence of Lasserre's hierachy.
\qed
\end{exm}

The rank condition \reff{cd:flat-trun} was used in \cite{LasBok,Lau}
as a sufficient condition to verify exactness of Lasserre's relaxations
in polynomial optimization and generalized problems of moments.
When the feasible set $K$ is defined by
polynomial equalities $h_1(x)=\cdots=h_s(x)=0$,
if the equations $h_i(x)=0$ have finitely many zeros,
then Lasserre's hierarchy has finite convergence
(cf. \cite[\S3]{Lau07} or \cite[\S6.5]{Lau}),
and for $k$ big enough every tms $y$ that is feasible for the resulting \reff{mom-las:put}
has a flat truncation (cf. \cite[Prop.~4.6]{LLR08} or \cite[Theorem~6.20]{Lau}).
For general polynomial optimization problems, there are no such
results in the existing literature.

As we have seen earlier,
for an optimizer of \reff{mom-las:put} to have a flat truncation,
a necessary condition is $f_k = f_{min}$
(suppose \reff{las-rex:put} and \reff{mom-las:put} has no duality gap).
That is, flat truncation is a sufficient condition for
Lasserre's hierarchy to have finite convergence.
Thus, one is wondering whether flat truncation is also necessary:
if Lasserre's hierarchy converges in finitely many steps, does {\it every}
\footnote{If $f_k = f_{min}$ and \reff{pop:K}
has a minimizer, say $x^*$, then \reff{mom-las:put} always
has an optimizer that is flat, e.g., $[x^*]_{2k}$.
Thus, it is more sensible to ask whether every optimizer has a flat truncatioin.}
optimizer of \reff{mom-las:put} have a flat truncation?
If so, flat truncation can be used as a certificate
for checking its finite convergence
and some minimizers of \reff{pop:K} can also be obtained.
This issue was addressed very little in the literature.

Another important issue in applications is how to certify
asymptotic convergence of Lasserre's hierarchy.
Under the archimedean condition, we know
$f_k \to f_{min}$ as $k \to \infty$ (cf. \cite{Las01}),
but the convergence to the set of global minimizers of \reff{pop:K} is not known very well.
When \reff{pop:K} has a unique global minimizer $u^*$,
Schweighofer \cite{Swg05} showed that
the subvector consisting of linear moments of a nearly optimizer
of \reff{mom-las:put} converges to $u^*$ as $k\to\infty$.
However, for the more general case that \reff{pop:K} has
more than one global minimizer, there are no such results
in the existing literature.
It is possible that $f_k = f_k^* < f_{min}$ for every $k$,
i.e., Lasserre's hierarchy may have no finite convergence,
as implied by Scheiderer's work \cite{Sch99}.
For instance, for the problem of minimizing the Motzkin polynomial over the unit ball,
the Lasserre's hierarchy does not converge within finitely many steps
(cf. \cite[Example~5.3]{Nie:JacSDP}).
In such cases, one should not expect any minimizer of
\reff{mom-las:put} to have a flat truncation.
However, how about the limit points of truncations of minimizers of
\reff{mom-las:put} as $k\to \infty$?
Is every such limit point flat, or does it have a flat truncation?
There is very little work on this issue.

\subsection{Contributions}

This paper focuses on proving a certificate
for checking convergence of Lasserre's hierarchy.
We assume \reff{pop:K} has a nonempty set $S$ of
global minimizers and its cardinality $|S|$ is finite.
Our main result is that flat truncation can generically
serve as such a certificate.

First, we study how to certify finite convergence.
For Lasserre's hierarchy of \reff{las-rex:put}
(it is also called a Putinar type one),
we show that: it has finite convergence
if and only if every minimizer of \reff{mom-las:put} has a flat truncation
when $k$ is sufficiently large,
under some generic conditions (e.g., Assumption~\ref{crt-pt<inf}).
For Schm\"{u}dgen type Lasserre's hierarchy,
which is a refined version of \reff{las-rex:put}
by using cross products of $g_j$ (cf. \reff{sos-rex:shmg}),
we show the same conclusion holds under some slightly weaker conditions.
This will be shown in Section~\ref{sec:exact-las}.

Second, we study how to certify asymptotical convergence.
Let $\{ y^{(k)} \}_{k=1}^{\infty}$ be a sequence of asymptotically optimal solutions
of \reff{mom-las:put}. We prove that: under the archimedean condition,
for any fixed order $t \geq \max\{d_f,d_g+|S|-1\}$,
the truncated sequence $\{y^{(k)}|_{2t}\}_{k=1}^{\infty}$ is bounded,
and its every limit point is flat, i.e.,
flat truncation is asymptotically satisfied for both Lasserre's hierarchies.
The same conclusion holds for the Schm\"{u}dgen type one when $K$ is compact
(the archimedean condition is not required).
This will be shown in Section~\ref{sec:cvg-las}.

Third, we show that flat truncation can be used
as a certificate to check exactness of standard SOS relaxations
and Jacobian SDP relaxations.
This will be shown in Section~\ref{sec:spec-cas}.

Last, we make some discussions about flat truncation
in Section~5.

\section{Certifying Finite Convergence}
\label{sec:exact-las}
\setcounter{equation}{0}

As we have seen in \S\ref{subsec:flat},
flat truncation is a sufficient condition
for Lasserre's hierarchy of \reff{las-rex:put} to converge in finitely many steps.
In this section, we show that flat truncation is also
a necessary condition in the generic case. Thus,
it could generically serve as a certificate for checking
finite convergence of \reff{las-rex:put}.
Lasserre's hierarchy of \reff{las-rex:put} is also called a Putinar type one,
since it uses Putinar's Positivstellensatz \cite{Put} in representing positive polynomials.
A refining of \reff{las-rex:put} is Schm\"{u}dgen type Lasserre's hierarchy,
which uses cross products of the constraining polynomials. Similarly,
flat truncation could also be generically used as a certificate
for checking finite convergence of the Schm\"{u}dgen type one.
We present the results in two separate subsections.

\subsection{Putinar Type Lasserre's relaxation}

The quadratic module generated by the tuple $g$ is
\[
Q(g) \quad := \quad \bigcup_{k=1}^\infty Q_k(g).
\]
The archimedean condition requires that there exists $\phi\in Q(g)$
such that the inequality $\phi(x)\geq 0$ defines a compact set in $x$.
Note that the archimedean condition implies the feasible set $K$ is compact.
The convergence for Lasserre's hierarchy of \reff{las-rex:put}
is based on Putinar's Positivstellensatz \cite{Put}:
if a polynomial $p$ is positive on $K$ and
the archimedean condition holds, then $p \in Q(g)$.
To certify finite convergence of Putinar type Lasserre's hierarchy,
we need the following assumption on $f,g_1,\ldots,g_m$.

\begin{ass} \label{crt-pt<inf}
There exists $\rho \in Q(g)$ such that
for every $J  \subseteq \{1,\ldots, m\}$ and
\[
\mc{V}_J := \{ x\in \re^n: \,   g_j(x)=0 \,\, (\forall j \in J), \,
\rank \, G_J(x) \leq |J| \},
\]
the intersection $\mc{V}_J \cap \mc{M} \cap \mc{P}$ is finite.
Here, denote $J=\{j_1,\ldots,j_l\}$,
\[
G_J(x) := \bbm \nabla f(x) & \nabla g_{j_1}(x) & \cdots &
\nabla g_{j_l}(x)\ebm,
\]
and $\mc{M} := \{x \in \re^n : \, f(x) = f_{min} \}$,
$\mc{P} := \{x \in \re^n : \, \rho(x) \geq 0 \}$.
\end{ass}

Assumption~\ref{crt-pt<inf} requires that for every $J$ the optimization problem
\[
\min  \quad f(x) \quad  \mbox{ s.t. } \quad g_j(x) = 0 \,\,(j\in J)
\]
has finitely many critical points lying on $\mc{M} \cap \mc{P}$
(if $u$ is a critical point of the above,
then $\rank\, G_J(u) \leq |J|$, cf.~\cite[\S2]{NR09}).
Let $S$ be the set of global minimizers of \reff{pop:K} and
\[
\mc{V}:= \cup_J \mc{V}_J.
\]
Clearly, $S\subseteq \mc{V}$, and Assumption~\ref{crt-pt<inf}
implies that $S$ is finite.

We would like to remark that Assumption~\ref{crt-pt<inf} is {\it generically} true.
(Here, we say a property is generically true if it holds
in the space of data except a set of Lebesgue measure zero, cf. \cite{NR09}).
As shown by Proposition~2.1 of \cite{NR09},
if $f$ and every $g_j$ have generic coefficients,
then $\mc{V}_J$ is finite for every index set $J$,
and Assumption~\ref{crt-pt<inf} is satisfied by simply choosing $\rho = 0$.

As a concrete example, Example~\ref{emp:FTneFE} satisfies Assumption~\ref{crt-pt<inf}.
It has $m=2$ inequalities.  Let $f,g_1,g_2$ be the objective,
first and second constraining polynomials respectively,
then clearly
\[
\mc{V}_{\emptyset} = \{ 0\}, \quad
\mc{V}_{ \{1\} } = \{ 0\}, \quad
\mc{V}_{\{2\} } = \{ 1\}, \quad
\mc{V}_{ \{1,2\} } = \emptyset.
\]
They are all finite sets, so Assumption~\ref{crt-pt<inf} is satisfied.
Example~\ref{emp:FTneFE} is in the generic case
where Assumption~\ref{crt-pt<inf} holds.

Assumption~\ref{crt-pt<inf} also holds in the following cases:
\bit

%

\item Suppose $S$ is finite, $\mc{V}\cap \mc{M}$ is infinite,
but $(\mc{V}\cap \mc{M})\backslash K$ is contained in a compact set $T$
not intersecting $K$. Then, there exists $\dt>0$ such that
\[
dist(x,K) \geq \dt \,\, \forall \, x \in T, \qquad
dist(x,K) =0 \,\, \forall \, x \in K.
\]
(Here $dist(x,K) = \min_{u\in K} \|x-u\|_2$ and $\|\cdot\|_2$
denotes the standard $2$-norm.)
The function $\tau(x):=dist(x,K)-\dt/2$ is continuous in $x$.
Then, by the compactness of $K$ and $T$,
there exists a polynomial $\eta$
that is an approximation of $\tau$ and satisfies
\[
\eta(x) \geq \dt/4 \,\, \forall \, x \in T, \qquad
\eta(x) \leq -\dt/4 \,\, \forall \, x \in K.
\]
Clearly, $-\eta$ is positive on $K$. Assume the archimedean condition
holds for $g$, then $-\eta \in Q(g)$ by Putinar's Positivstellensatz.
Assumption~\ref{crt-pt<inf} is satisfied by choosing $\rho = -\eta$,
because $\mc{V} \cap \mc{M} \cap \mc{P} = S$ is finite.

\item  Suppose $\mc{V} \cap \mc{M}$ is unbounded
but, except finitely many points, lies on a real algebraic variety
\[  \{x\in \re^n: w_1(x)=\cdots=w_r(x)=0\} \] not intersecting $K$.
Suppose the archimedean condition holds for $g$, then $K$ is compact
and there exists $\eps>0$ such that
the polynomial $w:=w_1^2+\cdots+w_r^2-\eps$ is positive on $K$.
By Putinar's Positivstellensatz, $w \in Q(g)$.
Assumption~\ref{crt-pt<inf} is satisfied by choosing $\rho = w$.

%
%

\eit

Our main result of this subsection is the following theorem.

\begin{theorem}  \label{thm:flat-put}
Suppose Assumption~\ref{crt-pt<inf} holds,
the set $S$ of global minimizers of \reff{pop:K} is nonempty,
and for $k$ big enough the optimal value of \reff{las-rex:put} is achievable
and there is no duality gap between \reff{las-rex:put} and \reff{mom-las:put}.
Then, Lasserre's hierarchy of \reff{las-rex:put} has finite convergence
if and only if every minimizer of \reff{mom-las:put} has a flat truncation
for $k$ sufficiently large.
\end{theorem}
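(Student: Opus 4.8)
The plan is to prove the two directions separately, with the hard work concentrated in the ``only if'' direction. The ``if'' direction is essentially already supplied by the discussion in Section~\ref{subsec:flat}: if some minimizer $y^*$ of \reff{mom-las:put} has a flat truncation, then by the truncation argument there $f_k^* = f_{min}$, and since we assume no duality gap for large $k$, we get $f_k = f_{min}$, which is precisely finite convergence. So I would dispose of this direction in one paragraph, citing the flat-truncation extraction argument verbatim.

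For the ``only if'' direction, assume finite convergence, so $f_k = f_k^* = f_{min}$ for all large $k$. The goal is to show that for $k$ sufficiently large, \emph{every} minimizer $y^*$ of \reff{mom-las:put} admits some $t \in [\max\{d_f,d_g\},k]$ with $\rank M_{t-d_g}(y^*) = \rank M_t(y^*)$. The natural strategy is a dimension/rank-stabilization argument combined with a localization of the support of $y^*$. First I would observe that since $\langle f, y^*\rangle = f_{min} = \langle f, [u]_{2k}\rangle$ for minimizers $u$, any atomic piece of $y^*$ must sit where $f = f_{min}$; more carefully, I want to argue that $y^*$, being feasible and optimal, is supported (in the sense of its moment matrices) on the finite set $\mc{V}\cap\mc{M}\cap\mc{P}$ guaranteed finite by Assumption~\ref{crt-pt<inf}. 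The role of $\rho \in Q(g)$ is crucial here: by optimality one can add the localizing constraint coming from $\rho$ essentially for free (since $\langle \rho, \cdot\rangle$-type conditions are implied), forcing the relevant KKT/critical-point structure. The Jacobian condition $\rank G_J(x) \le |J|$ is exactly the first-order optimality condition for the active constraints indexed by $J$, so the claim is that the moment matrix $M_t(y^*)$ ``concentrates'' on $\mc{V}$.

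The key technical step, and the one I expect to be the main obstacle, is converting this support-concentration into a \emph{uniform} rank stabilization $\rank M_{t-d_g}(y^*)=\rank M_t(y^*)$ valid for a fixed $t$ independent of which minimizer $y^*$ we picked and independent of $k$. The finiteness of $\mc{V}\cap\mc{M}\cap\mc{P}$, say of cardinality $N$, bounds $\rank M_t(y^*)$ uniformly by $N$ once the support lands in this set; then since the ranks $\rank M_0(y^*) \le \rank M_1(y^*) \le \cdots$ form a nondecreasing integer sequence bounded above by $N$, they must stabilize after at most $N$ steps, which yields a flat truncation at some $t \le \max\{d_f,d_g\} + N$. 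The delicate points are: (i) rigorously establishing that the support of any optimal $y^*$ lies in $\mc{V}$ (this is where I would invoke the KKT-type argument using $\rho$, showing that optimality forces the localizing matrix $L_\rho^{(k)}(y^*) \succeq 0$ together with $\langle f - f_{min}, y^*\rangle = 0$ to pin the mass onto critical points where $f$ attains its minimum and $\rho \ge 0$); and (ii) ensuring the stabilization order $t$ does not grow with $k$. I would handle (ii) by passing to the moment-matrix picture: once $\rank M_k(y^*) \le N$ for all large $k$, the monotone rank sequence forces equality $\rank M_{t-d_g}(y^*)=\rank M_t(y^*)$ at some $t$ bounded purely in terms of $N$ and the degree data, giving the uniform bound claimed in the theorem statement.

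I would organize the argument so that the support-localization lemma (support of optimal $y^*$ contained in $\mc{V}\cap\mc{M}\cap\mc{P}$) is extracted as a separate step, since it is the genuinely new input; the passage from bounded finite support to flat truncation is then a standard Curto--Fialkow-style rank argument that I would cite rather than belabor. The main obstacle remains making the localization rigorous for \emph{every} optimizer simultaneously and uniformly in $k$, which is exactly what Assumption~\ref{crt-pt<inf} is engineered to guarantee.
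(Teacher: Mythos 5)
Your ``if'' direction matches the paper's (the extraction argument from \S\ref{subsec:flat} plus no duality gap), and your endgame is fine as far as it goes: once one knows $\rank M_t(y^*)$ is bounded by the cardinality $N$ of a fixed finite set for all relevant $t$, a pigeonhole on the nondecreasing rank sequence yields a flat truncation at an order bounded independently of $k$ (though the bound is $\max\{d_f,d_g\}+N d_g$, not $\max\{d_f,d_g\}+N$, since flatness requires equality across a gap of $d_g$). This is a legitimate, slightly different endgame from the paper's Step~3, which instead reduces every monomial modulo the vanishing ideal against a fixed quotient basis and stabilizes ranks from a fixed degree onward.

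The genuine gap is precisely the step you flag as ``the main obstacle'' and then leave unsolved: localizing an \emph{arbitrary} optimizer $y^*$ onto a finite set. As phrased, it is not even well posed. A feasible point of \reff{mom-las:put} is merely a tms with positive semidefinite localizing matrices; it has no ``support'' and no ``atomic pieces'' until one proves it behaves like a measure on finitely many points, which is exactly the conclusion (flat truncation) you are trying to reach, so the reasoning is circular. Likewise, ``optimality forces $L_\rho^{(k)}(y^*)\succeq 0$'' is not a mechanism: that matrix is not a constraint of \reff{mom-las:put}, and its positivity is not what is needed. The paper's actual mechanism, absent from your outline, is: (a) achievability of \reff{las-rex:put} --- a hypothesis of the theorem that your proof never invokes --- gives an identity $f-f_{min}=s_0+\sum_i g_i s_i$ with SOS $s_i$; (b) the finite set is not $\mc{V}\cap\mc{M}\cap\mc{P}$ directly but $C=\{s_0=g_1s_1=\cdots=g_ms_m=0,\ \rho\geq 0\}$, shown finite by \emph{differentiating} the identity to place every $u\in C$ in some $\mc{V}_{J(u)}\cap\mc{M}\cap\mc{P}$ and then applying Assumption~\ref{crt-pt<inf}; (c) optimality of $y^*$ gives $\sum_i\langle g_is_i,y^*\rangle=0$, hence each $\langle g_is_i,y^*\rangle=0$, and for each generator $h$ of the vanishing ideal $I(C)$ a Positivstellensatz identity $h^{2\ell}+\varphi+\sum_i g_is_i\phi_i=0$ with $\varphi\in Q(\rho)\subseteq Q(g)$, combined with Lemma~\ref{mom-ker-limit} applied to $\mathscr{L}_{y^*}$, forces $\langle h^{2\ell},y^*\rangle=0$ and then $h\in\ker M_k(y^*)$ for $k$ large. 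This kernel containment is the only rigorous meaning of ``the support of $y^*$ lies in a finite set,'' and it is what delivers the rank bound your pigeonhole needs; without steps (a)--(c) your argument does not start.
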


\begin{remark} \label{rmk:ftn-put}
In Theorem~\ref{thm:flat-put},
1) if $K$ has nonempty interior, then for every order $k$
\reff{las-rex:put} achieves its optimum,
and it has no duality gap;
2) if finite convergence occurs, then \reff{mom-las:put}
always has a minimizer for $k$ big enough
(e.g., $[x^*]_{2k}$ is one for any $x^* \in S$),
and there is no duality gap;
3) when $k$ is big enough, for every minimizer $y^*$ of \reff{mom-las:put},
$y^*|_{2k-2}$ is always flat, as shown in the proof later;
4) we do not need to assume $K$ is compact or the archimedean condition holds.
\end{remark}

\begin{remark} \label{inf=>nonFlat}
As pointed out in Laurent's survey \cite[\S6.6]{Lau},
we could get the following properties about flat truncation.
1) In Theorem~\ref{thm:flat-put}, Assumption~\ref{crt-pt<inf} implies $S$ is finite.
If $\rank\, M_k(y^*)$ is maximum over the optimizers
of \reff{mom-las:put}, then for any flat truncation of $y^*$, say, $y^*|_{2t}$,
$\rank\,M_t(y^*)=|S|$. This means that all the minimizers of \reff{pop:K}
could be extracted from $y^*|_{2t}$.
2) When $S$ is infinite and $\rank\, M_k(y^*)$ is maximum over the optimizers
of \reff{mom-las:put}, then $y^*$ could not have a flat truncation.
3) When \reff{las-rex:put} and \reff{mom-las:put} are solved by primal-dual
interior-point algorithms,
a minimizer $y^*$ near the analytic center of the face of optimizers of \reff{mom-las:put}
is usually returned and $\rank\, M_k(y^*)$ is typically maximum.
Therefore, if the conditions of Theorem~\ref{thm:flat-put} are satisfied,
by solving a sufficient high order Lasserre's relaxation via interior point methods,
then we could typically find all minimizers of \reff{pop:K}
if $S$ is finite, and flat truncation could not be satisfied if $S$ is infinite.
\end{remark}

To prove Theorem~\ref{thm:flat-put}, we need some properties about the kernels
of moment and localizing matrices.
(Given a polynomial $p$, we also denote its coefficient vector by $p$,
for convenience of notations.)
For a localizing matrix $L_{h}^{(k)}(y)$, recall that
if $\deg(hp^2) \leq 2k$ then
\[
p^T \cdot L_{h}^{(k)}(y) \cdot p \quad = \quad
\mathscr{L}_y( h p^2).
\]
If $L_{h}^{(k)}(y) \cdot p =0$, we say $p \in \ker L_{h}^{(k)}(y)$.
When $M_k(y) = L_{1}^{(k)}(y)$ is a moment matrix ,
we similarly say $p \in \ker M_k(y)$ if $M_k(y) \cdot p =0$.

\begin{lem}  \label{mom-ker-limit}
Let $y \in \mathscr{M}_{2k}, h \in \re[x]$
be such that $L_{h}^{(k)}(y)\succeq 0$.
\bit
\item [i)] (\cite{LLR08,Lau}) Suppose $M_k(y)\succeq 0$. Let $p,q \in \re[x]$.
If $\deg(pq) \leq k-1$ and $q \in \ker M_k(y)$, then $pq \in \ker M_k(y)$.
If $q^\ell \in \ker M_k(y)$ and $2\lceil \ell/2 \rceil \deg(q) \leq k-1$, then $q \in \ker M_k(y)$.

\item [ii)](\cite{HN4})
Let $s$ be an SOS polynomial with $\deg(hs) \leq 2k$.
Then, $\langle hs, y \rangle \geq 0$.
If $\langle hs, y \rangle =0$,
then for any $\phi \in \re[x]_{2\ell}$ with $\deg(hs) + 2\ell \leq 2k-2$
we have $\langle hs\phi, y \rangle =0$.

\item [iii)] Let $\{p_j\}_{j=1}^\infty \subset \re[x]$ be a sequence such that
each $\deg(h p_j^2) \leq 2k$ and
\[
L_{h}^{(k)}(y) \cdot p_j \to 0 \quad \mbox{ as } \quad j \to \infty.
\]
If $q\in \re[x]$ and every $\deg(p_j q) \leq k-\lceil \deg(h)/2 \rceil-1$,
then
\[
L_{h}^{(k)}(y)\cdot (p_j q) \to 0 \quad \mbox{ as } \quad j \to \infty.
\]

\item [iv)]
Let $\{s_j\}_{j=1}^\infty$ be a sequence of SOS polynomials such that
\[
\deg(h s_j) < 2k \,\,( \forall \, j), \quad \mbox{ and } \quad
\langle hs_j, y \rangle \to  0 \mbox{ as } \, j \to \infty.
\]
If $\phi \in \re[x]_{2\ell}$ and every $\deg(h s_j)+2\ell \leq 2k-2$, then
\[
\langle h s_j \phi, y \rangle  \to 0 \quad \mbox{ as } \quad j \to \infty.
\]
\eit
\end{lem}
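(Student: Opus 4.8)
Parts i) and ii) are quoted, so the substance is the two limiting statements iii) and iv). My plan is to deduce both from a single \emph{uniform} estimate obtained by a finite-dimensional factorization, rather than from a naive continuity/Cauchy--Schwarz argument. \emph{Setup.} Write $A := L_{h}^{(k)}(y)$, $\kappa := k-\lceil \deg(h)/2\rceil$, and factor $A = B^TB$. The key structural remark is that a localizing matrix is itself a moment matrix of a shifted sequence: if $\tilde y$ is the tms of degree $2\kappa$ with $\tilde y_\gamma := \langle h x^\gamma, y\rangle$, then $L_{h}^{(k)}(y) = M_\kappa(\tilde y)$, so $A\succeq 0$ is exactly $M_\kappa(\tilde y)\succeq 0$ and part i) applies verbatim to $A$: for $u\in\ker A$ and any $w$ with $\deg(uw)\le\kappa-1$, one has $uw\in\ker A$ (equivalently $uw\in\ker B$, since $\ker A=\ker B$). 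For fixed $w$, set $D_w := \{u\in\re[x]:\deg(uw)\le\kappa-1\}$, a fixed finite-dimensional space. The engine I will use is
\[
\|A(uw)\| \le C_w\,\|Au\| \quad\text{and}\quad \|B(uw)\|\le C_w'\,\|Bu\| \qquad (u\in D_w),
\]
with $C_w,C_w'$ depending only on $A$ and $w$ (in particular not on the sequence index).

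\emph{Proof of the engine and the main obstacle.} Both bounds come from factoring one linear map through another. Consider $u\mapsto A(uw)$ on $D_w$. By the kernel invariance above, $\ker(A|_{D_w})\subseteq\ker\big(u\mapsto A(uw)\big)$, so $Au\mapsto A(uw)$ is a well-defined linear map $\Phi_w$ on the finite-dimensional space $A(D_w)$; being linear on a finite-dimensional space it is bounded, whence $\|A(uw)\|=\|\Phi_w(Au)\|\le\|\Phi_w\|\,\|Au\|$. The identical argument with $B$ in place of $A$ (using $\ker B=\ker A$) gives the second bound. This factorization is the crux, and it is where the hard part lies. A direct continuity argument fails because the hypotheses bound neither $\|p_j\|$ nor $\|u_{j,l}\|$: the vanishing quantity controls only the component of the sequence lying in the range of $A$, while its kernel component is unconstrained and may blow up. Kernel invariance is exactly what prevents multiplication by $w$ from leaking this unbounded kernel component into the range, so the output remains controlled, uniformly, by the vanishing quantity.

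\emph{Part iii).} Here $\deg(p_jq)\le\kappa-1$, so every $p_j\in D_q$, and the engine gives
\[
\big\|L_{h}^{(k)}(y)\,(p_jq)\big\| = \|A(p_jq)\| \le C_q\,\|Ap_j\| \to 0 .
\]
(Equivalently, expand $q$ into monomials $x^\beta$ and apply the engine to each $w=x^\beta$, noting $p_j\in D_{x^\beta}$ for every monomial occurring in $q$.)

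\emph{Part iv).} I would mimic the proof of ii). Decompose $s_j=\sum_l u_{j,l}^2$ with degrees uniformly bounded (possible as $\deg(hs_j)<2k$); then $\sum_l\|Bu_{j,l}\|^2=\sum_l\langle hu_{j,l}^2,y\rangle=\langle hs_j,y\rangle\to 0$. Splitting each monomial $x^\beta$ of $\phi$ as $x^\beta=x^{\beta'}x^{\beta''}$ with $|\beta'|,|\beta''|\le\ell$, a degree count using $\deg(hs_j)+2\ell\le 2k-2$ gives $\deg(u_{j,l})+\ell\le\kappa-1$, hence $\deg(u_{j,l}x^{\beta'}),\deg(u_{j,l}x^{\beta''})\le\kappa-1$, so
\[
\mathscr{L}_y\big(hu_{j,l}^2x^\beta\big) = \big(B(u_{j,l}x^{\beta'})\big)^T\big(B(u_{j,l}x^{\beta''})\big),
\]
and the engine yields $\|B(u_{j,l}x^{\beta'})\|\le C'\|Bu_{j,l}\|$, and likewise for $\beta''$. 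Cauchy--Schwarz then gives $|\mathscr{L}_y(hu_{j,l}^2x^\beta)|\le (C')^2\|Bu_{j,l}\|^2$, and summing over $l$ and over the monomials of $\phi$,
\[
|\langle hs_j\phi,y\rangle| \le (C')^2\Big(\textstyle\sum_\beta|\phi_\beta|\Big)\,\langle hs_j,y\rangle \to 0 .
\]
As in iii), the only obstacle is the possible unboundedness of the $u_{j,l}$, and it is absorbed by the same uniform bound.
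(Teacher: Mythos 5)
Your proof is correct, and it takes a genuinely different route from the paper's. The paper proves iii) by induction on $\deg(q)$, reducing to $q=x_i$: it identifies the low-degree entries of $L_{h}^{(k)}(y)\cdot(p_jx_i)$ with (shifted) entries of $L_{h}^{(k)}(y)\cdot p_j$, so that the principal submatrix satisfies $L_{h}^{(k-1)}(y)\cdot(p_jx_i)\to 0$, and then uses twice the spectral fact for positive semidefinite matrices ($Az_j\to 0$ iff $z_j^TAz_j\to 0$, valid without any bound on $z_j$) to bootstrap back to the full matrix; it never invokes part i). Its proof of iv) then writes $s_j$ as a sum of at most $\binom{n+k}{k}$ squares, reduces $\phi$ to a single square by linearity, and applies iii). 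You instead observe that $L_{h}^{(k)}(y)=M_\kappa(\tilde y)$ for the shifted tms $\tilde y_\gamma=\langle hx^\gamma,y\rangle$, which lets the kernel-invariance statement of part i) act on the localizing matrix itself; combined with the standard factorization fact (if $\ker S\subseteq\ker T$ on a finite-dimensional space then $T=\Phi\circ S$ with $\Phi$ bounded), this yields the uniform estimate $\|A(uw)\|\le C_w\|Au\|$ on $D_w$, from which iii) is a one-line consequence and iv) follows by monomial splitting plus Cauchy--Schwarz. Both arguments correctly confront the same central difficulty, which you articulate explicitly: the hypotheses give no bound on $\|p_j\|$ or $\|u_{j,l}\|$, only on their image under $A$, so one must prevent the kernel components from contaminating the conclusion. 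What your approach buys is a quantitative, uniform bound --- in iv) you get $|\langle hs_j\phi,y\rangle|\le C\,\bigl(\sum_\beta|\phi_\beta|\bigr)\langle hs_j,y\rangle$, which absorbs the sum over SOS terms automatically (no need to bound their number, and no need to decompose $\phi$ into differences of squares). What the paper's approach buys is self-containedness: it needs only elementary linear algebra of PSD matrices and does not route iii) through the kernel-invariance result of \cite{LLR08,Lau}.
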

\begin{proof}
i) The first part is basically from \cite[Lemma~5.7]{Lau} or \cite[Lemma~21]{Lau07}.
For the second part, if $\ell$ is even, the result follows Lemma~3.9 of \cite{LLR08};
if $\ell$ is odd, then $q^{\ell+1} \in \ker M_k(y)$ from the first part,
and the result is still true.

ii)
Write $s=\sum_i s_i^2$, then
\[
\langle hs, y \rangle = \sum_i \langle hs_i^2, y \rangle
= \sum_i s_i^T L_{h}^{(k)}(y) s_i \geq 0.
\]
Now, assume $\langle hs, y \rangle =0$.
One could always write $\phi = \sum_j p_jq_j$
with all $p_j, q_j \in \re[x]_\ell$.
Then, we have
\[
\langle hs, y \rangle = \sum_i s_i^T L_{h}^{(k)}(y) s_i, \quad
\langle hs\phi, y \rangle =  \sum_{i,j} (s_ip_j)^T L_{h}^{(k)}(y) (s_iq_j).
\]
Since $L_{h}^{(k)}(y)\succeq 0$, $\langle hs, y \rangle =0$
implies every $L_{h}^{(k)}(y) s_i=0$.
Then, by Lemma~2.3 of \cite{HN4}, every $L_{h}^{(k)}(y) (s_iq_j)=0$
and $(s_ip_j)^T L_{h}^{(k)}(y) (s_iq_j)=0$. So, $\langle hs\phi, y\rangle =0$.

iii)
Let $r = k - \lceil \deg(h)/2 \rceil$.
By a simple induction on $\deg(q)$,
it suffices to prove the lemma for $q = x_i \,(1\leq i\leq n)$.
Note that every $\deg(p_j)+1 \leq r -1$. Let
\[
u^{(j)}:= L_{h}^{(k)}(y) \cdot p_j,
\qquad v^{(j)} := L_{h}^{(k)}(y) \cdot (p_j \cdot x_i).
\]
They are indexed by integral vectors $\af \in \N^n$, and could also be expressed as
\[
u^{(j)} = \mathscr{L}_y \big( p_j \cdot h \cdot \mathfrak{m}_r \big), \quad
v^{(j)} =  \mathscr{L}_y \big(x_i \cdot p_j \cdot h \cdot \mathfrak{m}_r \big).
\]
(Here, $\mathfrak{m}_r$ is the vector of monomials of degrees $\leq r$.)
Then $u^{(j)} \to 0$ implies
\[
\underset{j\to\infty}{\lim} \mathscr{L}_y \big( p_j \cdot h \cdot x^\bt \big) = 0
\quad \mbox{ if } \quad  |\bt|\leq r.
\]
The $\af$-th entry of $v^{(j)}$ is
\[
v_\af^{(j)} := \mathscr{L}_y \big( p_j \cdot h \cdot x^{\af} \cdot x_i \big).
\]
If $\deg(x^{\af} \cdot x_i ) \leq r$, then $ v_\af^{(j)} = u_{\tilde{\af}}^{(j)}  \to 0$
($\tilde{\af}$ is the exponent of $x^{\af} \cdot x_i$). So,
\[
\underset{j\to\infty}{\lim} v_\af^{(j)} = 0 \quad
\mbox{ if } \, |\af| \leq r-1.
\]
This implies that (note $L_{h}^{(k-1)}(y)$ is a
leading principal submatrix of $L_{h}^{(k)}(y)$)
\[
\underset{j\to\infty}{\lim} \left( L_{h}^{(k-1)}(y) \right) \cdot (p_j\cdot x_i) =
\underset{j\to\infty}{\lim}
\mathscr{L}_y \big(x_i \cdot p_j \cdot h \cdot \mathfrak{m}_{r-1} \big) = 0.
\]
Since $L_h^{(k-1)}(y)$ is symmetric and positive semidefinite,
there exist orthogonal vectors $a_1, \ldots, a_t$ such that
$
L_h^{(k-1)}(y) = a_1a_1^T + \cdots + a_t a_t^T.
$
So, we have
\[
a_\ell^T (p_j\cdot x_i) \to 0 \quad \mbox{ as } j \to \infty,
\quad \ell = 1,\ldots, t.
\]
By the above, it holds that
\[
\underset{j\to\infty}{\lim}
(p_j\cdot x_i)^T  \cdot \left( L_{h}^{(k-1)}(y) \right) \cdot (p_j\cdot x_i) =
\underset{j\to\infty}{\lim} \sum_{\ell=1}^t  \big( a_\ell^T (p_j\cdot x_i)  \big)^2
= 0.
\]
When $|\af| > r-1$, the coefficient of
$x^\af$ in the polynomial $p_j\cdot x_i$ is zero. So
\[
(p_j \cdot x_i)^T \cdot \left( L_{h}^{(k)}(y) \right) \cdot (p_j \cdot x_i) =
(p_j \cdot x_i)^T \cdot \left( L_{h}^{(k-1)}(y) \right) \cdot (p_j \cdot x_i).
\]
Hence, we also have
\[
\underset{j\to\infty}{\lim} (p_j\cdot x_i)^T
\cdot \left( L_{h}^{(k)}(y) \right) \cdot (p_j\cdot x_i) = 0.
\]
Since $L_{h}^{(k)}(y)\succeq 0$, the above implies that
\[
\underset{j\to\infty}{\lim} \left( L_{h}^{(k)}(y) \right) \cdot (p_j\cdot x_i) = 0.
\]

iv) Write $s_j = \sum_t p_{j,t}^2$
(the length of summation is at most $\binom{n+k}{k}$, since $\deg(s_j)\leq 2k$).
By item ii), $\langle hs_j, y \rangle  \geq \langle hp_{j,t}^2, y \rangle  \geq 0$.
So, $\langle hs_j, y \rangle \to 0$ implies that for each $t$
\[
\langle h p_{j,t}^2, y \rangle = (p_{j,t})^T L_h^{(k)}(y) p_{j,t} \to 0
\quad \mbox{ as } \, j \to \infty.
\]
Since $L_h^{(k)}(y) \succeq 0$, we have $L_h^{(k)}(y) p_{j,t} \to 0$ as $j\to \infty$.
Like in item ii), one could write $\phi = \sum_i u_i v_i$ with all $u_i, v_i \in \re[x]_\ell$.
By item iii), it holds that for all $t,i$
\[
L_h^{(k)}(y) (p_{j,t} u_i) \to 0,\,
L_h^{(k)}(y) (p_{j,t} v_i) \to 0 \quad \mbox{ as } \, j \to \infty.
\]
So, we also have
\[
\left(L_h^{(k)}(y)\right)^{\half} (p_{j,t} u_i) \to 0,\,
\left(L_h^{(k)}(y)\right)^{\half} (p_{j,t} v_i) \to 0 \quad \mbox{ as } \, j \to \infty.
\]
(The $X^{\half}$ denotes the standard matrix square root of
a symmetric positive semidefinite matrix $X$.)
The above implies that
\[
(p_{j,t} u_i)^T L_h^{(k)}(y) (p_{j,t} v_i)
= (p_{j,t} u_i)^T \left(L_h^{(k)}(y)\right)^{\half} \cdot
\left(L_h^{(k)}(y)\right)^{\half}  (p_{j,t} v_i) \to 0
\]
and
\[
\langle h s_j \phi, y \rangle = \sum_{t,i} \langle hp_{j,t}^2u_iv_i, y \rangle  =
\sum_{t,i} (p_{j,t} u_i)^T L_h^{(k)}(y) (p_{j,t} v_i) \to 0,
\]
as $j \to \infty$.
\end{proof}

\noindent
{\it Proof of Theorem~\ref{thm:flat-put} } \,
The sufficiency of flat truncation was observed in \S 1.2,
if there is no duality gap between \reff{las-rex:put} and \reff{mom-las:put}.
We only need to prove its necessity.
Suppose $f_{k_0}=f_{min}$ for $k_0$ big enough.
Since \reff{las-rex:put} has a maximizer when its order is big enough,
we could assume \reff{las-rex:put} achieves its optimum $f_{min}$ for order $k_0$
(otherwise increase $k_0$),
i.e., $f-f_{min} \in Q_{k_0}(g)$.
So, there exist SOS polynomials $s_0, s_1, \ldots, s_m$ such that
every $\deg(g_is_i) \leq 2k_0$ and
\be \label{f-min:Q(g)}
f - f_{min} = s_0 + g_1s_1+\cdots+ g_ms_m.
\ee
Let $y^*$ be an arbitrary minimizer of \reff{mom-las:put}
(it always has one when $k\geq k_0$, e.g., $[x^*]_{2k}$, for any $x^*\in S$). Clearly,
$
\langle f, y^*\rangle = f_{\min}.
$
Let $C$ be the semialgebraic set defined as ($\rho$ is from Assumption~\ref{crt-pt<inf})
\be \label{def:C-rt}
C = \{ x \in \re^n: \, s_0(x) = g_1(x)s_1(x) = \cdots = g_m(x) s_m(x) =0, \rho(x) \geq 0 \}.
\ee
We complete the proof in three steps.

\bigskip
\noindent
{\bf Step~1} \, We show that $C$ is a finite set.
In the identity \reff{f-min:Q(g)}, differentiating its both sides
in $x$ results in (note $g_0 = 1$)
\[
\nabla f =  \sum_{i=0}^m \Big( s_i \cdot \nabla g_i
+ g_i \nabla s_i \Big).
\]
Choose an arbitrary $u\in C$, then clearly $u \in \mc{M} \cap \mc{P}$.
Let $J(u)=\{i: g_i(u) = 0 \}$.
Note that for every $i \not\in J(u)$, $s_i(u)=0$, and it implies
$\nabla s_i(u)=0$ (because $s_i$ is SOS and $u$ is a minimizer of $s_i$).
So, by the above, it holds that
\[
\nabla f(u) =  \sum_{i \in J(u)} s_i(u) \cdot \nabla g_i(u).
\]
Hence, $u\in \mc{V}_{J(u)} \cap \mc{M} \cap \mc{P}$.
Since there are at most $2^m$ active index sets like $J(u)$,
by Assumption~\ref{crt-pt<inf}, $C$ must be finite.

\medskip
\noindent
{\bf Step~2} \, We show that every generator of the vanishing ideal
of $C$ belongs to the kernel of $M_k(y^*)$ for $k$ big enough.
Since $C$ is finite, its vanishing ideal
\[
I(C) \, := \, \{ p\in \re[x]:\, p(u) = 0 \quad \forall \, u\in C\}
\]
is zero dimensional.
Let $\{h_1,\ldots, h_r\}$ be a Grobner basis of $I(C)$
with respect to a total degree ordering. Clearly, each $h_i$ vanishes on $C$.
By Real Nullstellensatz (cf. Corollary~4.4.3 of \cite{BCR}),
there exist $\ell \in \N $, $\phi_1,\ldots,\phi_m \in \re[x]$,
and $\varphi \in Q(\rho)$ (the quadratic module generated by the single polynomial $\rho$,
which is also equal to the prepordering generated by $\rho$) such that
\[
h_i^{2\ell} + \varphi + g_1s_1 \phi_1 + \cdots + g_ms_m \phi_m = 0.
\]
Applying the Riesz functional $\mathscr{L}_{y^*}$ to the above
(suppose $2k$ is bigger than the degrees of all the products there), we get
\be \label{Ly-ReNul=0}
\langle h_i^{2\ell}, y^* \rangle  + \langle \varphi, y^* \rangle +
\sum_{j=1}^m  \langle g_js_j\phi_j, y^* \rangle = 0.
\ee
Applying $\mathscr{L}_{y^*}$ to \reff{f-min:Q(g)} results in
(note $\langle f, y^* \rangle = f_{min}$)
\be
0 = \langle f - f_{min}, y^* \rangle =
\sum_{j=0}^m  \langle g_js_j, y^* \rangle.
\ee
Since each $s_j$ is SOS, every $\langle g_js_j, y^* \rangle\geq 0$,
by item ii) of Lemma~\ref{mom-ker-limit}.
Thus, from the above, we know every
$
\langle g_js_j, y^* \rangle  =0.
$
Again, by item ii) of Lemma~\ref{mom-ker-limit},
if $2k > 2 +\deg(g_js_j\phi_j)$,  then every
$
\langle g_js_j\phi_j, y^* \rangle =0.
$
So, from \reff{Ly-ReNul=0}, we can get
\be \label{<h2l,y>+<vphi,y>=0}
\langle h_i^{2\ell}, y^* \rangle  + \langle \varphi, y^* \rangle =0.
\ee
Since $h_i^{2\ell}$ is SOS, we similarly have
$\langle h_i^{2\ell}, y^* \rangle \geq 0$.
Since $Q(\rho) \subset Q(g)$, $\varphi \in Q(g)$
and one could write
$
\varphi = \sum_{j=0}^m g_j\sig_j
$
with each $\sig_j$ being SOS. Hence,
\[
\langle \varphi, y^* \rangle  =
\langle g_0 \sig_0, y^* \rangle + \langle g_1\sig_1, y^* \rangle
+ \langle g_m\sig_m, y^* \rangle \geq 0,
\]
by item ii) of Lemma~\ref{mom-ker-limit}.
Then \reff{<h2l,y>+<vphi,y>=0}
implies $\langle h_i^{2\ell}, y^* \rangle  = 0$,
i.e., $h_i^\ell \in \ker M_k(y^*)$.
By item i) of Lemma~\ref{mom-ker-limit} and
$M_k(y^*) = L_{g_0}^{(k)}(y^*)\succeq 0$, if $k$ is big enough, we get
\[
h_i \in \ker M_k(y^*).
\]

\medskip
\noindent
{\bf Step~3} \, It's enough to show that $y^*|_{2k-2}$ is flat.
Since $C$ is finite, the quotient space $\re[x]/I(C)$ is finitely dimensional.
Let $\{b_1, \ldots, b_L\}$ be a standard basis of $\re[x]/I(C)$.
Then, for every $\af \in \N^n$, we can write
\[
x^\af = \eta(\af) + \sum_{i=1}^r \theta_i h_i, \quad
\deg(\theta_i h_i) \leq |\af|, \quad
\eta(\af) \in \mbox{span}\{b_1,\ldots, b_L\}.
\]
Because every $h_i \in \ker M_k(y^*)$, we know
\[
\theta_i h_i \in \ker M_k(y^*) \quad  \mbox{ if }  |\af| \leq k-1,
\]
by item i) of Lemma~\ref{mom-ker-limit}.
Thus,
\[
x^\af - \eta(\af) \in \ker M_k(y^*) \quad \mbox{ if } |\af| \leq k-1.
\]
Set $d_b:=\max_j \deg(b_j)$. Then, every $\af$-th column ($d_b+1 \leq |\af| \leq k-1$)
of $M_k(y^*)$ is a linear combination
of $\bt$-th columns of $M_k(y^*)$ with $|\bt|\leq d_b$, so
\[
\rank \, M_{d_b}(y^*) = \rank\, M_{t}(y^*), \quad
\, t=d_b+1,\ldots, k-1.
\]
Hence, if $k-1-d_g \geq d_b$, then
\[
\rank \, M_{k-1-d_g}(y^*) = \rank\, M_{k-1}(y^*).
\]
That is, $y^*|_{2k-2}$ is flat and $y^*$ has a flat truncation,
when $k$ is big enough.
\qed

\subsection{Schm\"{u}dgen type Lasserre's relaxation}

Now we consider Schm\"{u}dgen type Lasserre's hierarchy,
which refines \reff{las-rex:put} as:
\be \label{sos-rex:shmg}
 \max \quad \gamma \quad
\mbox{ s.t. }  f - \gamma \in Pr_k(g).
\ee
The above $Pr_k(g)$ denotes the $k$-th truncated
preordering generated by the tuple $g$
(denote $g_\nu := g_1^{\nu_1}\cdots g_m^{\nu_m}$):
\[ 
Pr_k(g) := \left\{ \left.\sum_{\nu \in \{0,1\}^m } g_\nu \sig_\nu \right|
\sig_\nu \mbox{ is SOS}, \, \deg(g_\nu \sig_\nu) \leq 2k \mbox{ for every } \nu
\right\}.
\]
The dual optimization problem of \reff{sos-rex:shmg} is (cf. \cite{Las01, LasBok})
\be \label{mom-sdp:schmg}
\left\{\baray{rl}
\underset{ y \in \mathscr{M}_{2k} }{\min} &  \langle f, y \rangle \\
\mbox{s.t.}& L_{g_\nu}^{(k)}(y) \succeq 0 \,(\nu\in \{0,1\}^m), \,
\langle 1, y \rangle = 1.
\earay \right.
\ee
Let $\widetilde{f}_k$ and $\widetilde{f}_k^*$ be the optimal values of
\reff{sos-rex:shmg} and \reff{mom-sdp:schmg} respectively, for a given order $k$.
By weak duality, $\widetilde{f}_k \leq \widetilde{f}_k^*$ for every $k$.
If $K$ has nonempty interior, then $\widetilde{f}_k = \widetilde{f}_k^*$,
i.e., there is no duality gap.
Clearly, every $\widetilde{f}_k^* \leq f_{min}$.
Both sequences $\{\widetilde{f}_k\}$ and $\{\widetilde{f}_k^*\}$
are monotonically increasing.
The relaxation \reff{sos-rex:shmg} is stronger than \reff{las-rex:put},
because $Q_k(g) \subseteq Pr_k(g)$, so we have $f_k \leq \widetilde{f}_k$ for every $k$.
By Schm\"{u}dgen's Positivstellensaz
(if $K$ is compact and a polynomial $p$ is positive on $K$,
then $p \in Pr_{\ell}(g)$ for some $\ell$, cf. \cite{Smg}),
$\widetilde{f}_k$ converges to $f_{min}$,
and so does $\widetilde{f}_k^*$, when $K$ is compact.
If $\widetilde{f}_{k_1}=f_{min}$ for some order $k_1$, we say Schm\"{u}dgen type
Lasserre's hierarchy has {\it finite convergence}.

Suppose $y^*$ is an optimizer of \reff{mom-sdp:schmg}.
If $y^*$ has a flat truncation, say $y^*|_{2t}$, then, as shown in \S1.2,
one could not only extract $\rank\, M_t(y^*)$ global optimizers of \reff{pop:K} from $y^*$,
but also get a certificate for $\widetilde{f}_k=f_{min}$
if there is no duality gap between \reff{sos-rex:shmg} and \reff{mom-sdp:schmg}.
So, flat truncation is also a sufficient condition for
Schm\"{u}dgen type Lasserre's hierarchy to have finite convergence.
Is it also a necessary condition?
If so, flat truncation could serve as a certificate
for checking finite convergence of the hierarchy of \reff{sos-rex:shmg}.
Like for the Putinar type one, this is also generically true.
A similar result like Theorem~\ref{thm:flat-put} holds,
and weaker conditions are required.

\begin{theorem} \label{flatrun:exc-rlx}
Suppose the set $S$ of global minimizers of \reff{pop:K}
is nonempty and finite, and
there is no duality gap between \reff{sos-rex:shmg} and \reff{mom-sdp:schmg}
for $k$ big enough.
Then, Schm\"{u}dgen type Lasserre's hierarchy of \reff{sos-rex:shmg}
has finite convergence if and only if every minimizer of
\reff{mom-sdp:schmg} has a flat truncation for $k$ sufficiently large.
\end{theorem}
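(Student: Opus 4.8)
The plan is to mirror the three–step proof of Theorem~\ref{thm:flat-put}, since \reff{mom-sdp:schmg} differs from \reff{mom-las:put} only in that the localizing constraints now range over all cross products $g_\nu$, $\nu\in\{0,1\}^m$. The sufficiency of flat truncation is exactly as recorded in \S1.2 (using that there is no duality gap between \reff{sos-rex:shmg} and \reff{mom-sdp:schmg}), so only necessity requires work. Assuming $\widetilde{f}_{k_0}=f_{min}$ and, after possibly enlarging $k_0$, that \reff{sos-rex:shmg} attains its optimum, I would first extract a preordering representation
\[
f - f_{min} = \sum_{\nu\in\{0,1\}^m} g_\nu \sigma_\nu, \qquad \sigma_\nu \text{ SOS}, \ \deg(g_\nu\sigma_\nu)\leq 2k_0 .
\]
For any minimizer $y^*$ of \reff{mom-sdp:schmg} (one exists for $k\geq k_0$, e.g. $[x^*]_{2k}$ with $x^*\in S$), applying $\mathscr{L}_{y^*}$ gives $0=\langle f-f_{min},y^*\rangle=\sum_\nu\langle g_\nu\sigma_\nu,y^*\rangle$; since $L_{g_\nu}^{(k)}(y^*)\succeq 0$ and each $\sigma_\nu$ is SOS, item ii) of Lemma~\ref{mom-ker-limit} forces every $\langle g_\nu\sigma_\nu,y^*\rangle=0$.

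This sets up the same three steps. Define the real variety $C=\{x\in\re^n: g_\nu(x)\sigma_\nu(x)=0 \ \text{for all } \nu\in\{0,1\}^m\}$. Once $C$ is known to be finite, Steps~2 and~3 are in fact \emph{cleaner} than in Theorem~\ref{thm:flat-put}, because no auxiliary $\rho$ is present. Since each generator $h_i$ of the zero-dimensional ideal $I(C)$ vanishes on the real variety cut out by the $g_\nu\sigma_\nu$, the real Nullstellensatz (cf. \cite{BCR}) yields an identity $h_i^{2\ell}+s=\sum_\nu g_\nu\sigma_\nu\phi_\nu$ with $s$ SOS and $\phi_\nu\in\re[x]$; applying $\mathscr{L}_{y^*}$ and using $\langle g_\nu\sigma_\nu,y^*\rangle=0$ together with item ii) of Lemma~\ref{mom-ker-limit} for the multipliers $\phi_\nu$ gives $\langle h_i^{2\ell},y^*\rangle+\langle s,y^*\rangle=0$, whence $\langle h_i^{2\ell},y^*\rangle=0$ and $h_i^\ell\in\ker M_k(y^*)$; item i) then upgrades this to $h_i\in\ker M_k(y^*)$ for $k$ large. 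Finally the quotient-basis argument of Step~3 shows $\rank M_{k-1-d_g}(y^*)=\rank M_{k-1}(y^*)$, i.e. $y^*|_{2k-2}$ is flat.

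The main obstacle is therefore Step~1: proving that $C$ is finite using only that $S$ is finite, i.e. without invoking Assumption~\ref{crt-pt<inf}. My approach is to differentiate the identity, obtaining $\nabla f=\sum_\nu(\sigma_\nu\nabla g_\nu+g_\nu\nabla\sigma_\nu)$, and to evaluate it at a point $x\in C$. For each $\nu$ with $\sigma_\nu(x)=0$ the SOS polynomial $\sigma_\nu$ attains its minimum at $x$, so $\nabla\sigma_\nu(x)=0$ and the whole term drops; for every remaining $\nu$ one has $g_\nu(x)=0$, and applying the product rule to $g_\nu=g_1^{\nu_1}\cdots g_m^{\nu_m}$ shows that, on the active set $J(x)=\{i:g_i(x)=0\}$, only the gradients $\nabla g_i(x)$ with $i\in J(x)$ survive. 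Hence $\nabla f(x)$ lies in the span of $\{\nabla g_i(x):i\in J(x)\}$, so $\rank G_{J(x)}(x)\leq |J(x)|$ and $C\subseteq \mc{M}\cap\bigcup_J\mc{V}_J$; in particular a point of $C$ lying strictly outside $K$ forces \emph{all} $\sigma_\nu(x)=0$ and therefore $\nabla f(x)=0$.

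Converting this constrained–critical-point description into genuine finiteness of $C$ is the delicate point, and is where I expect the real work to lie. The idea is that points of $C\cap K$ are feasible points with $f=f_{min}$, hence lie in $S$ (finite); the cross-product structure of the preordering is exactly what controls the remaining points, since it collapses every off-$K$ point of $C$ to an unconstrained critical point of $f$ at level $f_{min}$. One then argues finiteness from the hypothesis that $S$ is finite, together with the compactness of $K$ underlying Schm\"{u}dgen's Positivstellensatz, so that $I(C)$ is zero-dimensional and the preceding two steps apply to every minimizer $y^*$.
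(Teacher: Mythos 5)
Your proposal founders on exactly the two points where Theorem~\ref{flatrun:exc-rlx} differs from Theorem~\ref{thm:flat-put}, and both are genuine gaps. First, you assume that ``after possibly enlarging $k_0$'' the relaxation \reff{sos-rex:shmg} attains its optimum, i.e., $f-f_{min}\in Pr_{k_0}(g)$. This is not a hypothesis of the theorem (Remark~\ref{rmk:schmg-las} stresses that attainability of \reff{sos-rex:shmg} is \emph{not} required), and it cannot be deduced: finite convergence $\widetilde{f}_{k}=f_{min}$ only says that $f-f_{min}+\eps\in Pr_{k}(g)$ for every $\eps>0$; since truncated preorderings need not be closed and $f-f_{min}$ is merely nonnegative (not positive) on $K$, the polynomial $f-f_{min}$ may lie in no $Pr_k(g)$ at all, no matter how large $k$ is. Without a representation of $f-f_{min}$ itself, your set $C$ is not even defined, so the plan collapses at the start. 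The paper instead works with the perturbed representations $f-f_{min}+\eps=\sum_\nu \sigma_\nu^\eps g_\nu$, whose degrees are bounded by $2k_1$ uniformly in $\eps$ while their coefficients may blow up, and passes to the limit $\eps\to 0$ using item iv) of Lemma~\ref{mom-ker-limit}.

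Second, even granting attainment, your Step 1 cannot work: finiteness of $C=\{x:\, g_\nu(x)\sigma_\nu(x)=0\ \forall\,\nu\}$ does not follow from $|S|<\infty$. As your own analysis shows, points of $C$ outside $K$ are (possibly constrained) critical points of $f$ at the level $f_{min}$, and nothing in the hypotheses prevents these from forming a positive-dimensional set, e.g., a curve of unconstrained critical points of $f$ at value $f_{min}$ disjoint from $K$. Ruling out precisely this is the purpose of Assumption~\ref{crt-pt<inf} and its polynomial $\rho$ in the Putinar case, and that assumption is deliberately absent from Theorem~\ref{flatrun:exc-rlx}; invoking compactness of $K$ does not help, since the offending points lie outside $K$. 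The paper sidesteps $C$ entirely: it takes the vanishing ideal of $S$ itself (finite by hypothesis), describes $S$ as $\{-(f-f_{min})=0,\ g_1\geq 0,\ldots,g_m\geq 0\}$, and applies the Positivstellensatz to get $h_i^{2\ell}-(f-f_{min})\varphi+\sum_\nu \phi_\nu g_\nu=0$ with $\varphi$ an arbitrary polynomial multiplying the equation. The extra term $\langle (f-f_{min})\varphi, y^*\rangle$ is then shown to vanish by combining the $\eps$-representations with the limit statement of Lemma~\ref{mom-ker-limit} iv). This single device simultaneously removes the need for attainability and for any finiteness assumption beyond $|S|<\infty$; your Steps 2--3 are fine as far as they go, but they rest on a Step 1 that, under the stated hypotheses, is both undefined (no representation exists) and false (no finiteness holds).
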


\begin{remark} \label{rmk:schmg-las}
The comments in Remarks~\ref{rmk:ftn-put} and \ref{inf=>nonFlat} for Theorem~\ref{thm:flat-put}
are all applicable to Theorem~\ref{flatrun:exc-rlx}.
Here, we point out some differences:
1) Theorem~\ref{flatrun:exc-rlx} does not require
the optimum of \reff{sos-rex:shmg} to be achievable.
2) Assumption~\ref{crt-pt<inf} is slightly stronger than that
$S$ is finite, although they are both generically true.
3) If it occurs that $Q_k(g)=Pr_k(g)$ (e.g., this is the case when \reff{pop:K}
has equality constraints and/or a single inequality constraint),
then in Theorem~\ref{thm:flat-put}
Assumption~\ref{crt-pt<inf} could be replaced by $|S|<\infty$
and \reff{las-rex:put} is not required to achieve its optimum.
\end{remark}

\medskip
\noindent
{\it Proof of Theorem~\ref{flatrun:exc-rlx} } \,
Like in the proof of Theorem~\ref{thm:flat-put},
we only need to prove the necessity of flat truncation.
Suppose $\widetilde{f}_{k_1}=f_{min}$ for some order $k_1$,
then for every $\eps >0$ we have
$
f - f_{min} +\eps \in Pr_{k_1}(g).
$
Write
\be \label{f-min+eps:sch}
f - f_{min} +\eps = \sum_{ \nu \in \{0,1\}^m } \sig_\nu^\eps \cdot g_{\nu}
\ee
for some SOS polynomials $\sig_\nu^\eps$ with $\deg(\sig_\nu^\eps g_{\nu}) \leq 2k_1$.
Note that as $\eps \to 0$ some coefficients of $\sig_\nu^\eps$
might go to infinity while its degree is bounded.
Let $y^*$ be an arbitrary minimizer of \reff{mom-sdp:schmg}
(if $k\geq k_1$, \reff{mom-sdp:schmg} always has one,
e.g., $[x^*]_{2k}$, for any $x^*\in S$). Then
$\mathscr{L}_{y^*}(f) =\widetilde{f}_k = f_{min}$.
Applying $\mathscr{L}_{y^*}$ to \reff{f-min+eps:sch},
we get
\be \label{eps=sum-nu}
\eps = \sum_{ \nu \in \{0,1\}^m } \langle \sig_\nu^\eps g_{\nu}, y^* \rangle.
\ee
Since every $\sig_\nu^\eps$ is SOS, by item ii) of Lemma~\ref{mom-ker-limit},
$\langle \sig_\nu^\eps g_{\nu}, y^* \rangle \geq 0$ and
\be \label{Lgyp-->0}
\lim_{ \eps \to 0} \, \langle \sig_\nu^\eps g_{\nu}, y^* \rangle = 0.
\ee
We complete the proof in three steps.

\medskip
\noindent
{\bf Step 1 }\,
The set $S$ is finite, so its vanishing ideal
\[
I(S) \quad = \quad \{ p\in \re[x]:\, p(u) = 0 \quad \forall \, u\in S\}
\]
is zero dimensional.
Let $\{h_1,\ldots, h_r\}$ be a Grobner basis of $I(S)$
with respect to a total degree ordering. Clearly, each $h_i$ vanishes on
\[
S = \{ x \in \re^n: \, -(f(x)-f_{min})=0, g_1(x) \geq 0, \ldots, g_m(x) \geq 0\}.
\]
By Real Nullstellensatz (cf. Corollary~4.4.3 of \cite{BCR}),
there exist $\ell \in \N$, $\varphi \in \re[x]$
and SOS polynomials $\phi_\nu$ ($\nu \in \{0,1\}^m$) such that
\[
h_i^{2\ell} - (f - f_{min}) \varphi +
\sum_{ \nu \in \{0,1\}^m } \phi_\nu g_{\nu} = 0.
\]
Applying $\mathscr{L}_{y^*}$ to the above
(suppose $2k$ is bigger than the degrees of all the above products)
results in
\be  \label{ReNul-Riesz}
\langle h_i^{2\ell}, y^* \rangle  +
\sum_{ \nu \in \{0,1\}^m } \langle \phi_\nu g_{\nu}, y^* \rangle =
\langle (f - f_{min}) \varphi, y^* \rangle.
\ee
By \reff{f-min+eps:sch}, for every $\eps>0$, we get
\be  \label{sum:vp*sig}
\langle (f - f_{min}+\eps) \varphi, y^* \rangle =
\sum_{ \nu \in \{0,1\}^m } \langle g_{\nu} \sig_\nu^\eps \varphi, y^* \rangle.
\ee

\medskip
\noindent
{\bf Step 2}\,
By \reff{Lgyp-->0} and item iv) of Lemma~\ref{mom-ker-limit}, we can get
\[
\lim_{ \eps \to 0} \,
\langle (g_{\nu} \sig_\nu^\eps \varphi, y^* \rangle = 0
\]
for $k$ big enough.
Hence, from \reff{sum:vp*sig} and the above, it holds that
\[
\langle (f - f_{min}) \varphi, y^* \rangle =  \lim_{ \eps \to 0} \,
\langle (f - f_{min}+\eps) \varphi, y^* \rangle =0.
\]
So, \reff{ReNul-Riesz} results in the equality
\[
\langle h_i^{2\ell}, y^* \rangle  +
\sum_{ \nu \in \{0,1\}^m } \langle \phi_\nu g_{\nu}, y^* \rangle = 0.
\]
Since $h_i^{2\ell}$ and every $\phi_\nu$ are SOS, each
$\langle \phi_\nu g_{\nu}, y^* \rangle \geq 0$
and $\langle h_i^{2\ell}, y^* \rangle \geq 0$,
by item ii) of Lemma~\ref{mom-ker-limit}.
Hence, $\langle h_i^{2\ell}, y^* \rangle = 0$,
i.e., $h_i^\ell \in \ker M_k(y^*)$. Again, by item i) of Lemma~\ref{mom-ker-limit}
and $M_k(y^*)=L_{g_0}^{(k)}(y^*)\succeq 0$, if $k$ is big enough, then
\[
h_i \in \ker M_k(y^*).
\]

\noindent
{\bf Step 3}\,
Like Step~3 in the proof of Theorem~\ref{thm:flat-put},
we would prove $y^*|_{2k-2}$ is flat by repeating the same argument there,
and omit it here for cleanness.
\qed

\section{Asymptotical Convergence}
\label{sec:cvg-las}
\setcounter{equation}{0}

In this section, we consider the case that
Lasserre's hierarchy of \reff{las-rex:put}
has asymptotic but not finite convergence.
Under the archimedean condition, Lasserre proved
$f_k \to f_{min}$ as $k\to\infty$.
Since it is possible that $f_k=f_k^*<f_{min}$ for every $k$,
we should not expect flat truncation holds in such a case.
When \reff{pop:K} has a unique global minimizer $u^*$,
Schweighofer \cite{Swg05} proved:
the subvector consisting of linear moments (indexed by $\af \in \N^n$ with $|\af|=1$)
of an almost optimizer $y^{(k)}$ of \reff{mom-las:put} for order $k$
converges to $u^*$ as $k\to\infty$.
When \reff{pop:K} has more than one minimizer,
do we have a similar convergence result?
To the author's best knowledge,
there is little work on this question.
This section is addressing this issue.
Generally, we need to use a higher order truncation of $y^{(k)}$,
and consider its limit points.
The main result of this section is that
every limit point of a truncation of $y^{(k)}$ is flat
if \reff{pop:K} has finitely many global minimizers.
In other words, flat truncation is asymptotically satisfied
when Lasserre's hierarchy has asymptotic convergence.

We assume the archimedean condition holds
for the tuple $g$: there exist $R>0$ and $k_0\in \N$ such that
\be \label{cond:AC}
R - \|x\|_2^2 \,\, \in \,\, Q_{k_0}(g).
\ee
For a Borel set $T \subseteq \re^n$, denote by ${\tt Prob}(T)$ the set of all
probability measures supported on $T$. Let $S$
be the set of global minimizers of \reff{pop:K}.
For each integer $t>0$, denote
\be \label{df:F2t}
F_{2t} := \left\{ \int_S [x]_{2t} d \mu:\, \mu \in {\tt Prob}(S) \right\}.
\ee

\begin{pro} \label{sol-tms-flat}
Let $S$ and $F_{2t}$ be defined as above. If $0<|S|<\infty$,
then for any integer $t\geq \max\{d_f,d_g + |S|-1\}$, every tms $z \in F_{2t}$ is flat.
\end{pro}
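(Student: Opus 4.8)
The plan is to exploit the finiteness of $S$ to represent every $z\in F_{2t}$ explicitly as a finitely atomic moment vector supported on $S$, and then verify feasibility and the flatness rank equality by direct computation. Since $|S|<\infty$, any $\mu\in{\tt Prob}(S)$ is finitely atomic: there are distinct points $u_1,\dots,u_r\in S$ with $r\leq |S|$ and weights $\lmd_1,\dots,\lmd_r>0$, $\sum_{i=1}^r\lmd_i=1$, such that $\mu=\sum_{i=1}^r\lmd_i\delta_{u_i}$ (here $\delta_u$ is the Dirac measure at $u$). Hence
\[
z=\int [x]_{2t}\,d\mu=\sum_{i=1}^r \lmd_i\,[u_i]_{2t}.
\]

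First I would verify feasibility of $z$ for \reff{mom-las:put} at order $k=t$. The normalization $\langle 1,z\rangle=\sum_{i=1}^r\lmd_i=1$ is immediate. For each $i$ and any $p$ of the appropriate degree,
\[
p^T L_{g_i}^{(t)}(z)\,p=\mathscr{L}_z(g_i p^2)=\sum_{j=1}^r \lmd_j\,g_i(u_j)\,p(u_j)^2\geq 0,
\]
because each $u_j\in S\subseteq K$ gives $g_i(u_j)\geq 0$; thus $L_{g_i}^{(t)}(z)\succeq 0$ for all $i=0,1,\dots,m$, and $z$ is feasible. The hypothesis $t\geq d_f$ merely guarantees $2t\geq\deg(f)$, so that $t$ is an admissible flat-truncation order.

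Next I would compute ranks. By linearity, for any degree $s$,
\[
M_s(z)=\sum_{i=1}^r \lmd_i\,[u_i]_s[u_i]_s^T=V_s\,\diag(\lmd_1,\dots,\lmd_r)\,V_s^T,
\]
where $V_s$ is the matrix with columns $[u_1]_s,\dots,[u_r]_s$. Since all $\lmd_i>0$ the diagonal factor is positive definite, so $\rank M_s(z)=\rank V_s$, i.e.\ the number of linearly independent evaluation vectors $[u_i]_s$. The key lemma, which is the heart of the argument, is that for $r$ distinct points $[u_1]_s,\dots,[u_r]_s$ are linearly independent whenever $s\geq r-1$. I would prove this by Lagrange interpolation: for each $j$ build a polynomial $p_j$ of degree at most $r-1$ (a product of $r-1$ affine functionals, each separating $u_j$ from one $u_i$) with $p_j(u_i)=\delta_{ij}$; then any relation $\sum_i c_i[u_i]_s=0$ forces $\sum_i c_i\,q(u_i)=0$ for every $q$ of degree $\leq s$, and testing against $q=p_j$ yields $c_j=0$.

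Finally I would assemble the pieces. Because $r\leq |S|$ and the hypothesis gives $t\geq d_g+|S|-1$, we have both $t\geq r-1$ and $t-d_g\geq |S|-1\geq r-1$, so the lemma applies at the levels $s=t$ and $s=t-d_g$, yielding $\rank M_{t-d_g}(z)=r=\rank M_t(z)$. Together with feasibility this is exactly the statement that $z$ is flat. The only nonroutine step is the linear-independence lemma; the one place where care is needed is the degree bookkeeping, since one must keep $t-d_g\geq |S|-1$ so that the smaller moment matrix $M_{t-d_g}(z)$ still attains the full rank $r$ even in the worst case $r=|S|$, which is precisely why the term $|S|-1$ appears in the hypothesis.
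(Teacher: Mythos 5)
Your proof is correct, but it takes a genuinely different route from the paper's. The paper never decomposes $\mu$ into atoms or computes the rank; instead it works with the full set $S$ (with $\ell=|S|$) and the ideal $I$ generated by the products $(x_{i_1}-a_{1,i_1})\cdots(x_{i_\ell}-a_{\ell,i_\ell})$, one linear factor per point of $S$. Every monomial $x^\af$ with $|\af|=\ell$ is a leading monomial of such a generator, so by reduction (and induction on $|\af|$) every $x^\af$ with $\ell\leq|\af|\leq t$ is congruent mod $I$ to some $p_\af\in\re[x]_{\ell-1}$; since $x^\af-p_\af$ vanishes on $S\supseteq\mbox{supp}(\mu)$, it lies in $\ker M_t(z)$, so all columns of degree $\geq\ell$ are linear combinations of columns of degree $\leq\ell-1$ and the rank stabilizes from level $\ell-1$ onward. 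Note that this argument only needs an upper bound on rank growth (column dependence), whereas your argument establishes the exact rank: you factor $M_s(z)=V_s\,\diag(\lmd)\,V_s^T$ and use Lagrange interpolation to show the $r$ evaluation vectors $[u_i]_s$ are independent once $s\geq r-1$, forcing $\rank M_{t-d_g}(z)=\rank M_t(z)=r$. Your route is more elementary (no ideal reduction or leading-monomial bookkeeping, just linear algebra plus interpolation) and yields strictly more information --- the rank equals the number of atoms of $\mu$, which is exactly the fact invoked in Remark~\ref{inf=>nonFlat} about extracting all of $S$ from a maximum-rank optimizer. What the paper's route buys is stylistic and structural economy: the kernel-membership argument is the same Step-3 mechanism used in the proofs of Theorems~\ref{thm:flat-put} and \ref{flatrun:exc-rlx}, so the proposition's proof reuses machinery already in place rather than introducing a separate interpolation lemma.
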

\begin{proof}
Let $\ell:=|S|$ and write $S=\{(a_{i,1},\ldots,a_{i,n}): i=1,\ldots,\ell\}$,
and $I$ be the ideal generated by the following polynomial equations
\[
(x_{i_1}-a_{1,i_1})(x_{i_2}-a_{2,i_2})\cdots(x_{i_\ell}-a_{l,i_n})=0,
\quad \forall \, i_1,\ldots, i_\ell \in \{1,\ldots,n\}.
\]
Clearly, the zero set of the above equations is $S$.
For each $\af \in \N^n$ with $|\af| = \ell$, the $x^\af$
is a leading monomial in one of the above defining polynomials,
so there exists $p_\af \in \re[x]_{\ell-1}$ such that
\be \label{p-af-in-I}
x^\af - p_\af  \equiv   0 \quad \mbox{ mod } \quad I.
\ee
Choose an arbitrary $z \in F_{2t}$, then there exists
$\mu \in {\tt Prob}(S)$ satisfying
\[
z = \int_S [x]_{2t} d \mu,
\qquad
M_t(z) = \int_S [x]_t [x]_t^T d \mu.
\]
By a simple induction on $|\af|$, one could show that for every $|\af| \in [\ell,t]$,
there exists $p_\af \in \re[x]_{\ell-1}$ satisfying \reff{p-af-in-I}.
Clearly, $x^\af-p_\af$ vanishes on $S$ and
\[
x^\af-p_\af \in \ker M_t(z).
\]
This means that every $\af$-th ($|\af|\geq \ell$) column of $M_t(z)$
is a linear combination of its $\bt$-th columns ($|\bt|\leq \ell-1$).
So, it holds that
\[
\rank \, M_{\ell-1}(z) = \rank \, M_r(z), \quad \mbox{ for } \,
r = \ell, \ell+1,\ldots,t.
\]
Thus, for all $t\geq d_g+\ell-1$, we have
\[
\rank \, M_{t-d_g}(z) = \rank \, M_t(z).
\]
Clearly, every $L_{g_i}^{(t)}(z) \succeq 0$,
and hence $z$ is flat by the definition.
\end{proof}

Denote by $\mathscr{M}_{\infty}$ the space of all full moment sequences
$w=(w_\af)$ indexed by vectors $\af\in \N^n$.
For all $w, z \in \mathscr{M}_{\infty}$, define
\[
\langle w, z \rangle = \sum_\af w_\af z_\af,
\quad \|w\|_2 = \sqrt{\langle w, w \rangle}.
\]
This induces a Hilbert space
\[
\mathscr{M}_{\infty}^2 = \{ w\in \mathscr{M}_{\infty}: \, \|w\|_2 <\infty\}.
\]
If we think of $\mathscr{M}_{\infty}^2$ as a Banach space,
then it is self-dual.
Clearly, \reff{pop:K} is equivalent to the optimization problem:
\be \label{K-mea-opt}
\min_{ y \in \mathscr{M}_{\infty} } \quad
\langle f, y \rangle \quad \mbox{s.t.} \quad y \in
\left\{\int_K [x]_{\infty} d \mu: \mu \in {\tt Prob}(K)\right \}.
\ee
The set of its optimizers is precisely ${\tt Prob}(S)$.

\begin{lem} \label{lm:||w||<=R}
Suppose \reff{cond:AC} holds.
If $y$ is feasible for \reff{mom-las:put} and $k\geq k_1 \geq k_0$, then
\be \label{w:k-k0<=bd}
\| y|_{2(k_1-k_0)} \|_2^2 \leq  1+R+\cdots + R^{k_1-k_0} .
\ee
\end{lem}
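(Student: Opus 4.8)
The plan is to exploit the positive semidefiniteness of the localizing matrix associated with the Archimedean certificate $R-\|x\|_2^2$, iterating it to produce a bound on the diagonal moments $y_{2\beta}$, and then to control the full norm $\|y|_{2(k_1-k_0)}\|_2$ by these diagonal entries. First I would observe that since $y$ is feasible for \reff{mom-las:put}, we have $M_k(y)\succeq 0$, and since $R-\|x\|_2^2 \in Q_{k_0}(g)$ we can write it as $\sum_i g_i \sigma_i$ with $\deg(g_i\sigma_i)\le 2k_0$; feasibility then gives $L_{R-\|x\|_2^2}^{(k)}(y)\succeq 0$, or more usefully the scalar inequalities $\langle (R-\|x\|_2^2)\,p^2, y\rangle \ge 0$ for every $p$ with $\deg p \le k-k_0$. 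Taking $p = x^\beta$ yields $R\,y_{2\beta} \ge \sum_{i=1}^n y_{2\beta+2e_i}$, a recursion that pushes mass down in degree.

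The key step is to turn this recursion into the stated geometric bound. I would argue by induction on the degree level $d = k_1-k_0$ downward, or equivalently prove that for each $d$,
\[
\sum_{|\beta| = d} y_{2\beta} \le R^{d},
\]
using $\langle 1, y\rangle = y_0 = 1$ as the base case and the inequality $\sum_{|\beta|=d}\sum_{i=1}^n y_{2\beta+2e_i} \le R \sum_{|\beta|=d} y_{2\beta}$ for the inductive step (noting that each $y_{2\gamma}$ with $|\gamma|=d+1$ appears among the $y_{2\beta+2e_i}$, so the left side dominates $\sum_{|\gamma|=d+1} y_{2\gamma}$). Summing over $d = 0,1,\ldots,k_1-k_0$ then gives $\sum_{|\beta|\le k_1-k_0} y_{2\beta} \le 1 + R + \cdots + R^{k_1-k_0}$.

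Finally I would relate $\|y|_{2(k_1-k_0)}\|_2^2 = \sum_{|\alpha|\le 2(k_1-k_0)} y_\alpha^2$ to the diagonal sum just bounded. The crucial observation is that each off-diagonal entry $y_\alpha$ is an entry of the PSD matrix $M_{k_1-k_0}(y)$ located at position $(\beta,\beta')$ with $\beta+\beta'=\alpha$, and for a PSD matrix every entry is bounded in absolute value by the geometric mean (hence the arithmetic mean) of the corresponding diagonal entries: $|y_{\beta+\beta'}| \le \sqrt{y_{2\beta}\,y_{2\beta'}}$. I would need to check that squaring and summing these entries is still controlled by $(\sum_\beta y_{2\beta})^2$ or a comparable quantity; the cleanest route is probably to note $\sum_{\alpha} y_\alpha^2 = \|M_{k_1-k_0}(y)\|_F^2$ up to the multiplicity with which each $y_\alpha$ appears as a matrix entry, and then use $\|A\|_F \le \operatorname{trace}(A)$ for $A\succeq 0$ (a fact the author recorded in the commented-out notation section). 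This last reconciliation --- matching the Frobenius norm of the moment matrix against the sum-of-squares norm on the sequence, accounting correctly for repeated entries --- is where I expect the only real friction, since a single $y_\alpha$ can occur in many matrix positions $(\beta,\beta')$ and one must ensure the multiplicities do not inflate the bound beyond the stated $1+R+\cdots+R^{k_1-k_0}$; I would resolve it by passing through the trace bound $\|M_{k_1-k_0}(y)\|_F \le \operatorname{trace} M_{k_1-k_0}(y) = \sum_{|\beta|\le k_1-k_0} y_{2\beta}$ rather than estimating entries individually.
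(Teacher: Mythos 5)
Your proposal follows essentially the same route as the paper's proof: use the Archimedean certificate $R-\|x\|_2^2\in Q_{k_0}(g)$ to bound the even-degree diagonal moments, sum these into a trace bound $\operatorname{Trace}\, M_{k_1-k_0}(y)\le 1+R+\cdots+R^{k_1-k_0}$, and then control the truncated sequence via $\sum_{|\af|\le 2(k_1-k_0)}y_\af^2\le \|M_{k_1-k_0}(y)\|_F^2\le \big(\operatorname{Trace}\, M_{k_1-k_0}(y)\big)^2$. The only real difference is how the trace bound is obtained: the paper multiplies the SOS identity for $R-\|x\|_2^2$ by $\|x\|_2^{2j-2}$ and applies $\mathscr{L}_y$ (via Lemma~\ref{mom-ker-limit}(ii)) to get $\mathscr{L}_y(\|x\|_2^{2j})\le R^j$ directly, whereas you localize at single squares $x^{2\bt}$ and run an induction on the degree level $d$; your version is correct (the degree counts work since $d\le k_1-k_0-1\le k-k_0-1$, and nonnegativity of the diagonal entries of $M_k(y)$ lets you discard the overcounting), and it is an equivalent, if slightly more granular, derivation.

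One caveat, which applies equally to the paper's own proof: the trace route delivers $\|y|_{2(k_1-k_0)}\|_2^2\le\big(1+R+\cdots+R^{k_1-k_0}\big)^2$, i.e., the inequality \reff{w:k-k0<=bd} with its right-hand side squared, not the literal statement. You identified the multiplicity mismatch as the ``only real friction'' and proposed the trace bound as the fix, but note that no fix can give the stated inequality, because it is false as written: take $n=1$, $g_1=R-x^2$ with $R>0$ (so $k_0=1$) and $y=[\sqrt{R}]_{2k}$; then $\|y|_{2(k_1-k_0)}\|_2^2=\sum_{a=0}^{2(k_1-k_0)}R^a$, which strictly exceeds $\sum_{a=0}^{k_1-k_0}R^a$ whenever $k_1>k_0$. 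So the misplaced square is a typo in the lemma; the correct conclusion, which both your argument and the paper's establish, is $\|y|_{2(k_1-k_0)}\|_2\le 1+R+\cdots+R^{k_1-k_0}$, and this weaker form is all that the application in Theorem~\ref{thm:y->flatrun} requires (boundedness of the truncated sequences, with the constant $1/(1-R)$ replaced by $(1-R)^{-2}$).
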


\begin{proof}
By \reff{cond:AC}, there exist
SOS polynomials $s_0, s_1, \ldots, s_m$ such that
\[
R - \|x\|_2^2 =  \sum_{i=0}^m g_is_i, \, \mbox{ each } \, \deg(g_is_i) \leq 2k_0.
\]
(Here $\|x\|_2:=\sqrt{x_1^2+\cdots+x_n^2}$.)
Thus, for every $ j = 1, \ldots, k_1-k_0$, we have
\[
R \cdot \|x\|_2^{2j-2} - \|x\|_2^{2j} =
\sum_{i=0}^m g_i \cdot s_i \cdot \|x\|_2^{2j-2}.
\]
By item ii) of Lemma~\ref{mom-ker-limit},
applying $\mathscr{L}_y$ to the above gives
\[
R \mathscr{L}_y(\|x\|_2^{2j-2}) - \mathscr{L}_y(\|x\|_2^{2j}) \geq 0,
\quad j = 1, \ldots, k_1-k_0.
\]
Therefore, it holds that
\[
\mathscr{L}_y(\|x\|_2^2) \leq R, \quad \mathscr{L}_y(\|x\|_2^4) \leq R^2,
\ldots, \quad \mathscr{L}_y(\|x\|_2^{2j}) \leq R^j.
\]
The moment matrix $M_k(y) = L_{g_0}^{(k)}(y)\succeq 0$ implies
its submatrix $M_{k_1-k_0}(y)\succeq 0$. In the below,
we denote by $\|A\|_F$ the Frobenius norm of a matrix $A$, i.e.,
$\|A\|_F = \sqrt{Trace(A^TA)}$.
Recall that $\|A\|_F \leq Trace(A)$ if $A \succeq 0$. Clearly,
\[
\sum_{ |\af| \leq 2(k_1-k_0) } |y_\af|^2 \leq \| M_{k_1-k_0}(y) \|_F^2
\leq (Trace(M_{k_1-k_0}(y)))^2,
\]
\[
Trace(M_{k_1-k_0}(y)) = \sum_{j=0}^{k_1-k_0} \sum_{|\af|=2j} y_\af
\leq \sum_{j=0}^{k_1-k_0}  \mathscr{L}_y(\|x\|_2^{2j})
\leq \sum_{j=0}^{k_1-k_0} R^j.
\]
So, the inequality \reff{w:k-k0<=bd} is true.
\end{proof}

It is possible that \reff{mom-las:put} might not have a minimizer,
i.e., the optimal value $f_k^*$ of \reff{mom-las:put}
may not be achievable (cf. \cite[Example~4.8]{Swg05}). Thus, we consider
an almost optimizer of \reff{mom-las:put}.
Let $\{y^{(k)}\}$ be a sequence such that
$y^{(k)}$ is feasible for \reff{mom-las:put} with order $k$.
We say $\{y^{(k)}\}$ is {\it asymptotically optimal} if
\[
\lim_{k\to\infty} \langle f, y^{(k)} \rangle =
\lim_{k\to\infty} f_k^*.
\]
(A different notion {\it nearly optimality} was used in \cite{Swg05}.)
Note that if \reff{cond:AC} holds and $\{y^{(k)}\}$ is asymptotically optimal,
then $\langle f, y^{(k)} \rangle \to f_{min}$ as $k\to\infty$.

\begin{theorem} \label{thm:y->flatrun}
Assume the archimedean condition \reff{cond:AC} holds and
the set $S$ of global minimizers of \reff{pop:K} is nonempty and finite.
Let $\{y^{(k)}\}$ be asymptotically optimal for \reff{mom-las:put}.
Then, for every  $t\geq \max\{d_f,d_g +|S|-1\}$,
the truncated sequence  $\{y^{(k)}|_{2t}\}$ is bounded,
and its every limit point belongs to $F_{2t}$ and is flat.
\end{theorem}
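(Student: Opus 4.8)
The plan is to combine the boundedness estimate of Lemma~\ref{lm:||w||<=R}, a diagonal extraction to pass to a full moment sequence, a representation theorem under the archimedean condition, and finally Proposition~\ref{sol-tms-flat} to upgrade membership in $F_{2t}$ to flatness.

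First I would establish boundedness. Fix $t \geq \max\{d_f, d_g+|S|-1\}$ and apply Lemma~\ref{lm:||w||<=R} with $k_1 = t + k_0$: for every $k \geq t+k_0$ the feasible point $y^{(k)}$ of \reff{mom-las:put} satisfies $\| y^{(k)}|_{2t} \|_2^2 \leq 1 + R + \cdots + R^t$. Since only the finitely many indices $k < t+k_0$ are excluded, the truncated sequence $\{y^{(k)}|_{2t}\}$ is bounded in the finite-dimensional space $\mathscr{M}_{2t}$, so limit points exist by the Bolzano--Weierstrass theorem.

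Next, let $z$ be any limit point, with $y^{(k_j)}|_{2t} \to z$ along a subsequence $k_j \to \infty$. Because each truncation $y^{(k)}|_{2s}$ is bounded once $k \geq s + k_0$ (again Lemma~\ref{lm:||w||<=R}), a diagonal extraction refines $\{k_j\}$ to a further subsequence along which $y^{(k_j)}|_{2s}$ converges for every $s$, producing a full sequence $w \in \mathscr{M}_{\infty}$ with $w|_{2t} = z$. Since positive semidefiniteness is a closed condition and every entry converges, the limit $w$ satisfies $M_k(w) \succeq 0$ and $L_{g_i}^{(k)}(w) \succeq 0$ for all $k$ and all $i$, together with $\langle 1, w \rangle = 1$. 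Under the archimedean condition \reff{cond:AC}, such a $w$ is the full moment sequence of a probability measure $\mu$ supported on $K$; this is the solution of the $K$-moment problem furnished by Putinar's Positivstellensatz. This is the step I expect to carry the most technical weight, since it is what forces the limit to be an honest measure rather than a merely positive semidefinite pseudo-moment sequence.

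Finally I would pin down the support of $\mu$. As $\deg f \leq 2t$ and $y^{(k_j)}|_{2t} \to z = w|_{2t}$, I get $\langle f, w \rangle = \langle f, z \rangle = \lim_j \langle f, y^{(k_j)} \rangle = f_{min}$, using that an asymptotically optimal sequence satisfies $\langle f, y^{(k)} \rangle \to f_{min}$ under \reff{cond:AC}. Hence $\int_K (f - f_{min})\, d\mu = 0$ with $f - f_{min} \geq 0$ on $K$, so $\mu$ is supported on $S = \{x \in K : f(x) = f_{min}\}$, i.e.\ $\mu \in {\tt Prob}(S)$ and $w = \int_S [x]_{\infty}\, d\mu$ (this also matches the fact that the optimizers of \reff{K-mea-opt} are exactly ${\tt Prob}(S)$). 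Truncating gives $z = \int_S [x]_{2t}\, d\mu \in F_{2t}$ as in \reff{df:F2t}. Flatness of $z$ is then immediate from Proposition~\ref{sol-tms-flat}, since $0 < |S| < \infty$ and $t \geq \max\{d_f, d_g+|S|-1\}$.
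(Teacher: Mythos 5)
Your proof is correct, and it follows the same overall architecture as the paper's: boundedness of the truncations via Lemma~\ref{lm:||w||<=R}, extraction of a full moment sequence as a limit, Putinar's solution of the $K$-moment problem under \reff{cond:AC} to produce a representing probability measure on $K$, identification of the support with $S$ via optimality, and finally Proposition~\ref{sol-tms-flat} for flatness. The genuine difference lies in the compactness step. The paper pads each $y^{(k)}|_{2(k-k_0)}$ with zeros, views it as an element of the Hilbert space $\mathscr{M}_{\infty}^2$, rescales $x$ so that $R<1$ in \reff{cond:AC} (making the geometric series $1+R+R^2+\cdots$ converge, hence the padded sequences uniformly norm-bounded), and then invokes Alaoglu's theorem to get a weak-$\ast$ convergent subsequence $z^{(k_j)}\to z^*$. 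You instead observe that for each fixed level $s$ the truncations $y^{(k)}|_{2s}$ are bounded (the same lemma with $k_1=s+k_0$), apply Bolzano--Weierstrass level by level, and diagonalize to get a subsequence whose truncations converge at every level, the limits being consistent and so assembling into a full sequence $w\in\mathscr{M}_{\infty}$. Your route is more elementary: it avoids both the normalization trick $R<1$ and the functional-analytic machinery, and it suffices because only coordinatewise (finite-order) convergence is ever used afterwards --- the positivity of all localizing matrices of $w$, the value $\langle f,w\rangle$, and the truncation $w|_{2t}=z$ all depend on finitely many entries at a time. The paper's weak-$\ast$ framework buys a single topology in which the whole infinite sequences converge, matching the technique of \cite{HN4} that the proof explicitly reuses, but it is not strictly needed for this theorem. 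Your closing argument that $\int_K (f-f_{min})\,d\mu=0$ with $f-f_{min}\geq 0$ on $K$ forces $\mu\in{\tt Prob}(S)$ is a direct rendering of the paper's appeal to the fact that the optimizers of \reff{K-mea-opt} are exactly ${\tt Prob}(S)$; the two are equivalent.
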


\begin{proof}
If we replace $k_1-k_0$ by $t$ in \reff{w:k-k0<=bd} of Lemma~\ref{lm:||w||<=R},
then the sequence  $\{y^{(k)}|_{2t}\}$ is clearly bounded.
One could generally assume $R<1$, because otherwise we can scale
$x$ in \reff{cond:AC} so that $R<1$ and Lasserre's hierarchy remains equivalent.

Let $v$ be an arbitrary limit point of $\{y^{(k)}|_{2t}\}$.
One could generally assume $y^{(k)}|_{2t} \to v$.
We need to prove $v \in F_{2t}$.
Suppose otherwise $v \not\in F_{2t}$. For every $k$, define
\[
z^{(k)} := y^{(k)}|_{2(k-k_0)}.
\]
Each $z^{(k)}$ could be treated as a vector in
$\mathscr{M}_{\infty}^2$ by adding zero entries to the tailing.
So, for $k > k_0$, the inequality \reff{w:k-k0<=bd} implies
\[
\| z^{(k)} \|_2^2 \leq 1 + R + R^2 + R^3 + \cdots  = 1/(1-R).
\]
The sequence $\{ z^{(k)} \}$ is bounded in $\mathscr{M}_{\infty}^2$.
By Alaoglu's Theorem (cf. \cite[Theorem~V.3.1]{Conway} or \cite[Theorem~C.18]{LasBok}),
it has a subsequence $\{z^{(k_j)} \}$ that is convergent in the weak-$\ast$ topology.
That is, there exists $z^* \in \mathscr{M}_{\infty}^2$ such that
\[
\langle c, z^{(k_j)} \rangle \, \to \,  \langle c,  z^* \rangle
 \quad \mbox{ as } j \to \infty
\]
for every $c \in \mathscr{M}_{\infty}^2$.
If we choose $c$ as $\langle c, w \rangle= w_\af$ for each $\af$,
then one could get
\be \label{2k2t->w*2t}
 y^{(k_j)}|_{\af} = z^{(k_j)}|_{\af}\to  z^*|_{\af}, \qquad z^*|_{2t} = v.
\ee
Note that every $L_{g_i}^{(r)}(z^{(k_j)})\succeq 0$ if $k_j \geq 2r$.
By \reff{2k2t->w*2t}, it holds that for all $r$
\[
L_{g_i}^{(r)}(z^*) \succeq 0, \quad i=0,1,\ldots,m.
\]
Hence, $z^* \in \mathscr{M}_{\infty}^2$ is a full moment sequence
such that the corresponding localizing matrices of
all orders are positive semidefinite.
By Lemma~3.2 of Putinar \cite{Put} and \reff{cond:AC}, $z^*$ admits a probability measure
(note $\mathscr{L}_{z^*}(1)=1$) supported on $K$, and it is feasible for \reff{K-mea-opt}.
The polynomial $f$ defines a continuous linear functional acting on
$\mathscr{M}_{\infty}^2$ as $\langle f, w \rangle$.
By the weak-$\ast$ convergence of $z^{(k_j)}\to z^*$, it holds that
\[
\langle f, z^* \rangle = \lim_{j \to \infty} \langle f, z^{(k_j)} \rangle
= \lim_{j \to \infty} \langle f, y^{(k_j)} \rangle  =
\lim_{j \to \infty} f_{k_j}^*  = f_{min}.
\]
The above last two equalities are because \reff{cond:AC} holds
and the sequence $\{y^{(k)}\}$ is asymptotically optimal.
This means that $z^*$ is also a minimizer of \reff{K-mea-opt}.
Hence, $v=z^*|_{2t} \in F_{2t}$.
But, this contradicts the earlier assertion that $v\not\in F_{2t}$.
Therefore, every limit point of $\{y^{(k)}|_{2t}\}$
belongs to $F_{2t}$. By Proposition~\ref{sol-tms-flat},
we know every $v\in F_{2t}$ is flat if $t \geq \max\{d_f, d_g+|S|-1\}$.
\end{proof}

%
%
%
%

In Theorem~\ref{thm:y->flatrun}, for a fixed $t$, it is possible that
the truncation $y^{(k)}|_{2t}$ is not flat for every $k$.
But, for every $\eps>0$, if $k$ is big enough, there exists a $z\in F_{2t}$
such that $\|y^{(k)}|_{2t} - z \|_2 \leq \eps.$
In other words, flat truncation is asymptotically satisfied
when $S$ is finite.
Theorem~\ref{thm:y->flatrun} also implies that
the distance between $y^{(k)}|_{2t}$ and $F_{2t}$
tends to zero as $k \to \infty$.
Therefore, in numerical computations, flat truncation has
a good chance to be satisfied,
even if Lasserre's hierarchy does not have finite convergence.

\begin{remark}
A similar version of Theorem~\ref{thm:y->flatrun} also holds
for Schm\"{u}dgen type Lasserre's hierarchy.
This is because \reff{mom-sdp:schmg} can be thought of as a Putinar type one
applied to the $2^m$ constraints $g_\nu(x) \geq 0 \,(\nu \in \{0,1\}^m)$.
The archimedean condition \reff{cond:AC} for the corresponding constraints
is automatically satisfied for compact $K$, by Schm\"{u}dgen's Positivstellensatz.
Hence, when $K$ is compact and $S$ is finite, flat truncation is also asymptotically satisfied
for the hierarchy of \reff{mom-sdp:schmg}.
\end{remark}

\section{Some Applications}
\label{sec:spec-cas}
\setcounter{equation}{0}

In this section, we consider two interesting cases
of semidefinite relaxations for solving
polynomial optimization, for which
flat truncation could serve as a certificate
to check their exactness and be used to get minimizers.

\subsection{Standard SOS relaxations}

For a polynomial $f\in \re[x]$,
consider the unconstrained polynomial optimization problem
\be  \label{min-f:R^n}
\underset{x\in \re^n}{\min} \qquad  f(x).
\ee
To have a finite minimum $f_{min}$,
assume $f$ has an even degree $2d$.
The standard SOS relaxation (cf. \cite{Las01,PS03})
for solving \reff{min-f:R^n} is
\be \label{sos-rex:R^n}
 \max \quad \gamma \quad
\mbox{ s.t. }  \quad f - \gamma  \quad \mbox{ is SOS}.
\ee
Its dual optimization problem is
\be \label{mom-sdp:R^n}
 \min_{ y \in \mathscr{M}_{2d} } \quad \langle f, y \rangle \quad
\mbox{ s.t. }  M_d(y) \succeq 0, \, \langle 1, y \rangle = 1.
\ee
Lasserre \cite{Las01} showed that \reff{sos-rex:R^n} and \reff{mom-sdp:R^n}
have the same optimal value (there is no duality gap), which we denote by $f_{sos}$,
and \reff{sos-rex:R^n} achieves its optimum if $f_{sos}>-\infty$,
because \reff{mom-sdp:R^n} has an interior point.
Clearly, $f_{sos} \leq f_{min}$.
As demonstrated by the numerical experiments of \cite{PS03},
it occurs quite a lot that $f_{min}=f_{sos}$.
Thus, one is wondering:
how do we check $f_{sos} = f_{min}$,
and if so how do we get minimizers of \reff{min-f:R^n}?
This issue could be solved by using flat truncation.
%
%

The whole space $\re^n$ would be defined by the trivial inequality $1 \geq 0$.
Thus, the associated $k$-th truncated quadratic module and preordering
coincide and
\[
Q_k(1) = Pr_k(1) = \Sig_{n,2k},
\]
where $\Sig_{n,2k}$ denotes the cone of SOS polynomials
having $n$ variables and degree $2k$.
Like \reff{las-rex:put} and \reff{mom-las:put}, the $k$-th order Lasserre's relaxation
for \reff{min-f:R^n} is
\be \label{kth-sos:R^n}
 \max \quad \gamma \quad
\mbox{ s.t. }  f - \gamma  \in \Sig_{n,2k},
\ee
and its dual optimization problem is
\be \label{kth-mom:R^n}
 \min_{ y \in \mathscr{M}_{2k} }  \quad \langle f, y \rangle \quad
\mbox{ s.t. }  M_k(y) \succeq 0, \, \langle 1, y \rangle = 1.
\ee
Clearly, for every $k \geq d$, \reff{kth-sos:R^n} is equivalent to \reff{sos-rex:R^n}.
However, \reff{kth-mom:R^n} and \reff{mom-sdp:R^n} are different
in satisfying flat truncation, though they have the same optimal value.
Theorem~\ref{flatrun:exc-rlx} implies the following.

\begin{cor}
Let $f_{sos}$ be the optimal value of \reff{sos-rex:R^n}.
Suppose $f_{min}=f_{sos}$ and \reff{min-f:R^n} has
a nonempty set of finitely many global minimizers.
Then, for $k$ big enough, every minimizer $y^*$ of \reff{kth-mom:R^n}
has a flat truncation.
\end{cor}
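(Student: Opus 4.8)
The plan is to deduce this corollary directly from Theorem~\ref{flatrun:exc-rlx}, which governs the Schm\"{u}dgen type hierarchy. The key observation is that the unconstrained problem \reff{min-f:R^n} is the special case of \reff{pop:K} where the feasible set is all of $\re^n$, described by the single trivial constraint $g_1 \equiv 1 \geq 0$. For this tuple the truncated preordering and quadratic module coincide with the SOS cone, $Q_k(1)=Pr_k(1)=\Sig_{n,2k}$, so the Schm\"{u}dgen relaxation \reff{sos-rex:shmg} becomes exactly \reff{kth-sos:R^n}, its dual \reff{mom-sdp:schmg} becomes \reff{kth-mom:R^n}, and since $g=1$ we have $d_g=1$. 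First I would verify that the hypotheses of Theorem~\ref{flatrun:exc-rlx} hold: the set $S$ of global minimizers is nonempty and finite by assumption.

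The second thing I would check is the no-duality-gap hypothesis. Since \reff{kth-mom:R^n} always has an interior point (for instance, the moment sequence of any probability measure with a positive density, such as a Gaussian, makes $M_k(y)\succ 0$), Slater's condition holds for the moment side, and hence there is no duality gap between \reff{kth-sos:R^n} and \reff{kth-mom:R^n} for every order $k$. Equivalently, one can cite Lasserre's result quoted earlier that \reff{mom-sdp:R^n} has an interior point and there is no duality gap; the same reasoning applies at each truncation level $k\geq d$.

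The final point is the finite-convergence hypothesis. For any $k\geq d$ the relaxation \reff{kth-sos:R^n} is equivalent to \reff{sos-rex:R^n}, so its optimal value equals $f_{sos}$. Under the assumption $f_{min}=f_{sos}$, we therefore have $\widetilde{f}_k = f_{sos} = f_{min}$ for all $k\geq d$, i.e. the Schm\"{u}dgen type hierarchy has finite convergence in the sense of Theorem~\ref{flatrun:exc-rlx}. Applying that theorem then yields that every minimizer $y^*$ of \reff{kth-mom:R^n} has a flat truncation for $k$ sufficiently large, which is the claim.

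I do not expect any genuine obstacle here, since the statement is a corollary that merely instantiates the general theorem. The only step requiring a sentence of justification is the identification $Q_k(1)=Pr_k(1)=\Sig_{n,2k}$ together with $d_g=1$, so that the machinery of Theorem~\ref{flatrun:exc-rlx}---in particular Step~1, where one shows the relevant variety is finite---collapses to the elementary observation that $S$ itself is the finite set in question. Indeed, with $g\equiv 1$ the constraint set in \reff{f-min+eps:sch} involves only the SOS term, so the critical-point analysis is vacuous and finiteness of $S$ is used directly; this is exactly the simplification noted in item~3 of Remark~\ref{rmk:schmg-las}.
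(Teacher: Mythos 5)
Your proposal is correct and follows essentially the same route as the paper: the paper obtains this corollary precisely by specializing Theorem~\ref{flatrun:exc-rlx} to the trivial constraint $1\geq 0$ (so that $Q_k(1)=Pr_k(1)=\Sig_{n,2k}$, $d_g=1$, and the dual becomes \reff{kth-mom:R^n}), with the hypotheses verified exactly as you do---no duality gap since the moment problem has an interior point, and finite convergence since $\widetilde{f}_k=f_{sos}=f_{min}$ for all $k\geq d$. Your only (harmless) imprecision is attributing a critical-point/variety-finiteness step to the proof of Theorem~\ref{flatrun:exc-rlx}; that analysis belongs to Theorem~\ref{thm:flat-put}, whereas Theorem~\ref{flatrun:exc-rlx} uses finiteness of $S$ directly, which is exactly why the black-box application works here.
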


If $f_{min} > -\infty$, then $f$ generically has finitely many
minimizers (cf. \cite[Prop.~2.1]{NR09}). Note that
for the case $f_{sos} =f_{min}$,
if $y^*$ is an optimizer of \reff{mom-sdp:R^n}
instead of \reff{kth-mom:R^n},
then $y^*$ might not have a flat truncation.
In this sense, \reff{kth-mom:R^n} is stronger than \reff{mom-sdp:R^n},
though their optimal values are same.
We show the difference between \reff{kth-mom:R^n} and \reff{mom-sdp:R^n}
in satisfying flat truncation as follows.

\begin{exm}
Consider the polynomial optimization problem
\[
\min_{x\in \re^3} \quad (x_1x_2-1)^2+(x_1x_3-1)^2+(x_2x_3-1)^2.
\]
Let $f$ be the objective. Clearly, $f_{sos}=f_{min}=0$.
The global minimizers are $\pm (1,1,1)$.
However, not every minimizer of the corresponding
\reff{mom-sdp:R^n} has a flat truncation.
For instance, for all $a>0$, the tms
\footnote{Its entries are listed in graded lexicographical ordering.}
\[
%
%
\baray{c}
y^*(a):=
(1,\ 0,\  0,\  0,\  1,\   1,\   1,\    1,\   1,\  1,\ 0,\  0,\   0,\  0,\ 0,\ 0 ,\ 0,\  0,\  0 ,\ 0, \\
   1+a,\   1,\    1,\    1,\    1,\   1,\    1,\    1,\   1,\   1,\
   1+a,\    1,\    1,\    1,\   1+a)
\earay
\]
is a minimizer of \reff{mom-sdp:R^n}.
For all $a>0$, one can verify that
\[
\rank\, M_0(y^*(a))=1, \quad \rank\, M_1(y^*(a))=2, \quad \rank\, M_2(y^*(a))=5.
\]
Thus, $y^*(a)$ does not have a flat truncation if $a>0$.
If we solve \reff{mom-sdp:R^n} by {\tt SeDuMi},
a numerical value $2.7232$ of $a$ is returned,
and the computed minimizer does not have a flat truncation.
For $k=3$, the numerical optimizer of \reff{kth-mom:R^n}
returned by {\tt SeDuMi} does not have a flat truncation.
However, for $k=4$, the numerical optimizer of \reff{kth-mom:R^n}
returned by {\tt SeDuMi} has a flat truncation,
and we are able to get the two global minimizers.
\qed
\end{exm}

\subsection{Jacobian SDP relaxation}

Now we consider Jacobian SDP relaxations for polynomial optimization
introduced by the author in the earlier work \cite{Nie:JacSDP}.
It is a refining of relaxation \reff{sos-rex:shmg}.
Its basic idea is to introduce some redundant polynomial equalities, say,
\[
\varphi_1(x) = \cdots = \varphi_L(x) = 0,
\]
which are constructed from the minors of Jacobians of $f, g_1, \ldots, g_m$.
For instance, when \reff{pop:K} has a single inequality constraint, say $g(x) \geq 0$,
the newly introduced equalities are
\[
g(x)\cdot f_{x_i}(x)= 0, \, i=1,\ldots,n,
\]
\[
\sum_{1\leq i<j\leq n, i+j=\ell} \Big(
f_{x_i}(x) g_{x_j}(x)  - f_{x_j}(x) g_{x_i}(x) \Big) =0, \,
\ell = 3,\ldots, 2n-1.
\]
When \reff{pop:K} achieves its minimum and some generic nonsingularity
conditions (see Assumption~2.2 of \cite{Nie:JacSDP}) holds,
\reff{pop:K} is equivalent to
\be \label{pop:Jac-K}
\left\{\baray{rl}
\underset{x\in \re^n}{\min} &  f(x)\\
\mbox{s.t.} & \varphi_1(x) = \cdots = \varphi_L(x) = 0, \\
& g_1(x)\geq 0, \ldots, g_m(x) \geq 0.
\earay \right.
\ee
Let $PJ_k$ denote the $k$-th truncated preordering
for \reff{pop:Jac-K} (cf. \cite[Sec.~2.2]{Nie:JacSDP}).
Then, the resulting version of relaxation \reff{sos-rex:shmg}
for \reff{pop:Jac-K} is
\be  \label{sos:jac}
\max \quad \gamma \quad
\mbox{ s.t. } \quad f - \gamma  \in PJ_k.
\ee
Similarly, its dual optimization problem is (cf. \cite[Sec.~2.2]{Nie:JacSDP})
\be  \label{mom:jac}
\left\{ \baray{rl}
\underset{ y \in \mathscr{M}_{2k} }{\min}  & \langle f, y \rangle \, \\
\mbox{ s.t. } & L_{\varphi_j}^{(k)}(y)=0, j=1,\ldots,L, \\
&  L_{g_{\nu}}^{(k)}(y)\succeq 0 \,(\forall \nu\in\{0,1\}^m),
\, \langle 1, y \rangle = 1.
\earay \right.
\ee

An attractive property of the hierarchy of \reff{sos:jac}
is that it always has finite convergence.
Thus, a practical concern in applications is:
how does one identify its finite convergence?
Interestingly, flat truncation could serve as a certificate for this purpose
when \reff{pop:K} has finitely many minimizers.

\begin{cor}  \label{cor:jac-sdp}
Suppose \reff{pop:K} has a nonempty set of finitely many global minimizers
and its feasible set is nonsingular
(Assumption~2.2 of \cite{Nie:JacSDP} is satisfied).
Then, for all $k$ big enough, the optimal value of \reff{sos:jac}
equals the global minimum of \reff{pop:K}
and every minimizer of \reff{mom:jac} has a flat truncation.
\end{cor}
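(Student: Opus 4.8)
The plan is to derive Corollary~\ref{cor:jac-sdp} as a direct application of Theorem~\ref{flatrun:exc-rlx} to the Jacobian-refined problem \reff{pop:Jac-K}, treating the localizing-matrix equalities $L_{\varphi_j}^{(k)}(y)=0$ in \reff{mom:jac} as the moment-problem form of the equality constraints $\varphi_j(x)=0$. First I would invoke the key structural fact established in \cite{Nie:JacSDP}: under Assumption~2.2 (feasible-set nonsingularity) and the assumption that the minimum of \reff{pop:K} is achieved, problem \reff{pop:Jac-K} is equivalent to \reff{pop:K}, so the two share the same finite, nonempty set $S$ of global minimizers and the same global minimum $f_{min}$. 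Moreover the author's earlier work guarantees that the hierarchy \reff{sos:jac} has \emph{finite convergence}, i.e.\ the optimal value of \reff{sos:jac} equals $f_{min}$ for all $k$ sufficiently large. This supplies exactly the ``$\widetilde{f}_{k_1}=f_{min}$'' hypothesis that drives the necessity direction of Theorem~\ref{flatrun:exc-rlx}.

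Next I would check that \reff{sos:jac}--\reff{mom:jac} genuinely fit the Schm\"udgen template of \reff{sos-rex:shmg}--\reff{mom-sdp:schmg}. The preordering $PJ_k$ is the truncated preordering for the constraint tuple of \reff{pop:Jac-K}, which includes both the inequalities $g_\nu$ and the equalities $\pm\varphi_j$; in the dual, an equality $\varphi_j(x)=0$ contributes the pair of localizing constraints $L_{\varphi_j}^{(k)}(y)\succeq 0$ and $L_{-\varphi_j}^{(k)}(y)\succeq 0$, which together are equivalent to the single equality $L_{\varphi_j}^{(k)}(y)=0$ appearing in \reff{mom:jac}. Thus \reff{mom:jac} is precisely the instance of \reff{mom-sdp:schmg} for the augmented tuple, and $S$ finite is exactly the standing hypothesis of Theorem~\ref{flatrun:exc-rlx}. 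The no-duality-gap requirement can be arranged as in Remark~\ref{rmk:ftn-put}: either it follows from the interior-point structure of the relaxation, or one observes that finite convergence together with the existence of the minimizer $[x^*]_{2k}$ for $x^*\in S$ forces zero gap for $k$ large. Applying Theorem~\ref{flatrun:exc-rlx} then yields that, for $k$ large, every minimizer of \reff{mom:jac} has a flat truncation, and combined with the finite-convergence statement this gives both conclusions of the corollary.

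The main obstacle I anticipate is the careful bookkeeping needed to justify that the equality form $L_{\varphi_j}^{(k)}(y)=0$ really is interchangeable with the two semidefinite localizing conditions $L_{\pm\varphi_j}^{(k)}(y)\succeq 0$, and that the degree truncation in $PJ_k$ is compatible with the $g_\nu$-indexing of \reff{mom-sdp:schmg}; here one must confirm that the Positivstellensatz step inside the proof of Theorem~\ref{flatrun:exc-rlx} (Step~1, applied to the variety $S$) goes through with the equalities $\varphi_j$ adjoined to the constraint description of $S$ without altering the conclusion that $h_i\in\ker M_k(y^*)$. A secondary point requiring attention is verifying that Assumption~\ref{crt-pt<inf} is \emph{not} needed: Theorem~\ref{flatrun:exc-rlx} demands only that $S$ be finite (plus the no-gap condition), which is exactly what is hypothesized, so the corollary should follow cleanly provided the equivalence \reff{pop:K}$\leftrightarrow$\reff{pop:Jac-K} and the finite-convergence property from \cite{Nie:JacSDP} are cited correctly. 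Once those citations are pinned down, the remainder is a short deduction rather than new analysis.
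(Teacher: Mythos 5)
Your proposal is correct and follows essentially the same route as the paper's own proof: cite Theorem~2.3 of \cite{Nie:JacSDP} for finite convergence $\widetilde{f}_{k_1}=f_{min}$ under Assumption~2.2, note that \reff{pop:Jac-K} inherits the nonempty finite minimizer set from its equivalence with \reff{pop:K}, observe that finite convergence forces zero duality gap between \reff{sos:jac} and \reff{mom:jac}, and then apply Theorem~\ref{flatrun:exc-rlx}. Your extra bookkeeping on the equality constraints (that $L_{\varphi_j}^{(k)}(y)=0$ is the dual form of the pair $L_{\pm\varphi_j}^{(k)}(y)\succeq 0$ coming from $\pm\varphi_j$ in the preordering) addresses a point the paper leaves implicit, but it does not change the argument.
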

\begin{proof}
As before, denote by $\widetilde{f}_k$ the optimal value of \reff{sos:jac}.
By Theorem~2.3 of \cite{Nie:JacSDP}, there exists $k_1$
such that $\widetilde{f}_{k_1}$ is equal to the global minimum of \reff{pop:Jac-K},
which is also equal to $f_{min}$,
when Assumption~2.2 of \cite{Nie:JacSDP} is satisfied.
So, there is no duality gap between \reff{sos:jac}
and \reff{mom:jac} when $k$ is big.
Clearly, \reff{pop:Jac-K} also has finitely many global minimizers,
since it is equivalent to \reff{pop:K}.
Thus, the conclusion of this corollary just follows from Theorem~\ref{flatrun:exc-rlx}.
\end{proof}

In particular, if a polynomial has finitely many global minimizers
in the whole space $\re^n$, then its minimizers
could be obtained by using gradient SOS relaxation \cite{NDS},
which is a special case of Jacobian SDP relaxation
(see Corollary~2.6 of \cite{Nie:JacSDP}).
Its dual version of \reff{mom:jac} becomes
\be  \label{mom:grad}
\left\{ \baray{rl}
\underset{ y \in \mathscr{M}_{2k} }{\min}   & \langle f, y \rangle \, \\
\mbox{ s.t. } & L_{f_{x_j}}^{(k)}(y)=0, j=1,\ldots,n, \\
&  M_k(y)\succeq 0, \, \, \, \langle 1, y \rangle = 1.
\earay \right.
\ee
Hence, we could also get the following.

\begin{cor}
Suppose a polynomial $f$ has finitely many global minimizers in $\re^n$.
Then, for all $k$ big enough, the optimal value of \reff{mom:grad}
equals the global minimum of $f$ in $\re^n$,
and its every minimizer has a flat truncation.
\end{cor}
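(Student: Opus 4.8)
The plan is to obtain this corollary as the unconstrained specialization of Corollary~\ref{cor:jac-sdp}, with essentially no new moment-matrix computation. First I would view the unconstrained problem \reff{min-f:R^n} as the instance of \reff{pop:K} having no inequality constraints, so that $m=0$ and the only generator is $g_0\equiv 1$. In this case the Jacobian construction of \cite{Nie:JacSDP} produces exactly the gradient equations $f_{x_1}(x)=\cdots=f_{x_n}(x)=0$ as its redundant equalities $\varphi_j$, while the preordering $PJ_k$ collapses to the single moment constraint $M_k(y)\succeq 0$; this is precisely Corollary~2.6 of \cite{Nie:JacSDP}, so the auxiliary problem \reff{pop:Jac-K} becomes $\min f(x)$ subject to $\nabla f(x)=0$, and the dual relaxation \reff{mom:jac} reduces to \reff{mom:grad}.

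Next I would check that the two hypotheses of Corollary~\ref{cor:jac-sdp} hold for this instance. The set of global minimizers is nonempty and finite by assumption, and these minimizers coincide with those of \reff{pop:Jac-K}, since every global minimizer of $f$ over $\re^n$ is a critical point and conversely no point of the gradient variety can have value below $f_{min}$; hence the two problems are equivalent and share the same (finite) set of minimizers. The nonsingularity requirement (Assumption~2.2 of \cite{Nie:JacSDP}) is satisfied automatically here: the feasible set of \reff{min-f:R^n} is all of $\re^n$, which has no boundary and no active inequality constraints, so the constraint-qualification content of that assumption is vacuous. With both hypotheses in hand, Corollary~\ref{cor:jac-sdp} (which itself rests on Theorem~\ref{flatrun:exc-rlx}) yields that for all sufficiently large $k$ the optimal value of \reff{sos:jac} equals $f_{min}$ with no duality gap, and that every minimizer of \reff{mom:jac} has a flat truncation. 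Since \reff{sos:jac} and \reff{mom:jac} now read as the gradient SOS relaxation and its dual \reff{mom:grad}, and the no-gap conclusion transfers the value $f_{min}$ to \reff{mom:grad}, this gives exactly the two assertions of the corollary.

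The only step requiring genuine care, rather than a verbatim appeal to the constrained corollary, is confirming that Assumption~2.2 of \cite{Nie:JacSDP} is indeed met when the feasible set is the whole space $\re^n$. I expect this to be the main (and only) obstacle: one must verify that the nonsingularity hypothesis, which in \cite{Nie:JacSDP} is phrased for a constraint set with possibly active inequalities, degenerates to a condition that is automatic for $\re^n$, so that the reduction to the gradient ideal of \cite{NDS} is legitimate and Theorem~2.3 of \cite{Nie:JacSDP} guarantees finite convergence of \reff{sos:jac}. Once this is settled, the conclusion follows immediately as an instance of Corollary~\ref{cor:jac-sdp}, with nothing further to prove.
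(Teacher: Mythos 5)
Your proposal is correct and follows essentially the same route as the paper: the paper likewise obtains this corollary as the immediate specialization of Corollary~\ref{cor:jac-sdp}, noting via Corollary~2.6 of \cite{Nie:JacSDP} that the gradient SOS relaxation is the unconstrained instance of the Jacobian SDP relaxation with dual \reff{mom:grad}, and that the nonsingularity assumption is vacuous when there are no constraints. Your extra checks (equality of the minimizer sets of \reff{min-f:R^n} and \reff{pop:Jac-K}, and that Assumption~2.2 of \cite{Nie:JacSDP} degenerates to a trivial condition over $\re^n$) are exactly the points the paper leaves implicit, and they are verified correctly.
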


\section{Some discussions}

Our main result is that, in the generic case (e.g., Assumption~\ref{crt-pt<inf} holds),
Putinar type Lasserre's hierarchy has finite convergence
if and only if the flat truncation condition \reff{cd:flat-trun}
is satisfied at every minimizer of
the dual optimization problem \reff{mom-las:put} for $k$ big enough.
Under the archimedean condition, we also showed that
\reff{cd:flat-trun} is always asymptotically satisfied
if there are finitely many global minimizers.
(We have similar conclusions for Schm\"{u}dgen type Lasserre's hierarchy.)
This result has applications
in solving polynomial optimization problems of the form \reff{pop:K}.
One could solve \reff{mom-las:put}, starting with an order $k\geq \max\{d_f,d_g\}$.
If \reff{cd:flat-trun} is satisfied,
then we stop and can get one or several global minimizers;
if \reff{cd:flat-trun} is not satisfied,
then increase $k$ by one and solve \reff{mom-las:put} again.
By repeating this procedure, we solve \reff{mom-las:put}
and then check \reff{cd:flat-trun} iteratively. In the generic case,
when Lasserre's hierarchy has finite convergence,
our result implies that this procedure must terminate
within finitely many steps;
when Lasserre's hierarchy has only asymptotic convergence,
our result shows that this procedure will terminate asymptotically.
Flat truncation has been used algorithmically
in the solution extraction mechanism
by the polynomial optimization package
{\tt GloptiPoly~3} \cite{GloPol3}.

We would like to make some further remarks about the difference
between the flat truncation condition \reff{cd:flat-trun}
and the flat extension condition \reff{con:fec}.
Consider the case that Lasserre's hierarchy has finite convergence,
say, $f_k^* = f_k =f_{min}$ for all $k\geq k_0$.
Assume $k_0 \geq d:=\max\{d_f, d_g\}$.
Suppose $y^*$ is a minimizer of \reff{mom-las:put} with order $k > k_0$
and \reff{cd:flat-trun} is satisfied for $t\in [k_0+d_g, k]$.
Clearly, the truncation $\hat{y}:=y^*|_{2k_0}$ is also
a minimizer of \reff{mom-las:put} with order $k_0$,
and $\hat{y}$ has a flat extension $y^*|_{2t}$.
One is thus interested in the question whether
every minimizer of \reff{mom-las:put} with order $k_0$
is a truncation of a minimizer of \reff{mom-las:put} with order $k>k_0$
that satisfies \reff{cd:flat-trun}?
Or equivalently, does every minimizer of \reff{mom-las:put} with order $k_0$
has a flat extension? Unfortunately, this may not be true.
We show this by using Example~\ref{emp:FTneFE}.
By {\tt SeDuMi}, the computed optimizer $z^*$ of \reff{mom-las:put} with order $2$
is $z^*:=(1.0000,0.0000,0.0000,0.0000,0.7908)$, and
\[
M_2(z^*) =
\left[\baray{lll}
1.0000 & 0.0000  & 0.0000  \\ 0.0000  & 0.0000  & 0.0000  \\
0.0000  & 0.0000  &  0.7908
\earay \right].
\]
The tms $z^*$ is not {\it recursively generated}
(cf. \cite[Section~2]{CF05}), because the coefficient vector of the polynomial $x-1$
belongs to the kernel of $M_2(z^*)$ but
the coefficient vector of the multiple $x(x-1)$ does not.
So, $z^*$ does not admit any representing measure (cf. \cite[Corollary~2.11]{CF05}),
and hence $z^*$ has no flat extension.

It is interesting to know for how big $k$ every minimizer of
the dual optimization problem \reff{mom-las:put}
satisfies the flat truncation condition \reff{cd:flat-trun}.
The existence of such a $k$ is guaranteed
by Theorem~\ref{thm:flat-put} for the generic case,
but there is no estimate on it.
Clearly, any bound on such $k$ depends on $f$.
Thus, one is interested to know whether there is a uniform bound $N$,
which only depends on the defining polynomials $g_i$ of the set $K$ and the degree of $f$,
such that every minimizer of \reff{mom-las:put} with order $N$
satisfies \reff{cd:flat-trun} for all generic $f$ with the given degree?
If such a uniform bound $N$ would exist, it would be very useful in applications.
This is because one would only need to solve \reff{mom-las:put} for a fixed order $N$;
if \reff{cd:flat-trun} is satisfied, we know Lasserre's hierarchy has finite convergence;
if not, then Lasserre's hierarchy has no finite convergence in the generic case.
However, such a uniform bound $N$ typically does not exist.
For instance, there is no such an $N$ when $K$ is the three dimensional unit ball and
$\deg(f)=6$. Consider the Motzkin polynomial
$M:=x_1^4x_2^2+x_1^2x_2^4+x_3^6-3x_1^2x_2^2x_3^2$.
There exists a sequence of generic polynomials $p_k$ of degree six
such that $p_k \to M$ as $k \to \infty$.
Let $\gamma_k$ be the minimum value of $p_k$ on the unit ball.
Clearly, $\gamma_k \to 0$. The unit ball has nonempty interior,
so \reff{las-rex:put} achieves its optimal value for every $k$ (cf. \cite{Las01}).
If, otherwise, such $N$ would exist, then $p_k-\gamma_k \in Q_N(1-\|x\|_2^2)$ for all $k$.
Since the cone $Q_N(1-\|x\|_2^2)$ is closed (cf. \cite[Theorem~3.33]{Lau}),
the Motzkin polynomial $M$, which is the limit of $p_k-\gamma_k$ as $k \to \infty$,
must also belong to $Q_N(1-\|x\|_2^2)$.
However, this is not true (cf. \cite[Example~5.3]{Nie:JacSDP}).
Therefore, such a uniform bound $N$ does not exist.
Any bound on $k$, for which
every minimizer of \reff{mom-las:put} satisfies \reff{cd:flat-trun},
must depend on $f$, even if $f$ has generic coefficients.
The dependence relation between such a bound on $k$ and a generic polynomial $f$
is an interesting future research question.
%
%
%

\bigskip \noindent
{\bf Acknowledgement}\,
The author was partially supported by NSF grants
DMS-0757212 and DMS-0844775,
and he would like very much to thank the referees
for fruitful suggestions on improving the paper.


\begin{thebibliography}{99}



\bibitem{BCR}
J. Bochnak, M. Coste and M-F. Roy.
{\it Real Algebraic Geometry}, Springer, 1998.

\bibitem{Conway}
Conway, John B.
A course in Functional Analysis,
Springer-Verlag, 1990, Second Edition.



%
%


%
%
%
%
%



\bibitem{CF05}
R. Curto and L. Fialkow.
Truncated K-moment problems in several variables.
{\it Journal of Operator Theory},  54(2005), pp. 189-226.






\bibitem{HN4}
J.~W.~Helton and J.~Nie.
A Semidefinite Approach for Truncated K-Moment Problem.
{\it Preprint}, 2011.





\bibitem{HenLas05}
D.~Henrion and J.~Lasserre.
Detecting global optimality and extracting solutions in GloptiPoly.
{\it Positive polynomials in control},  293–310,
Lecture Notes in Control and Inform. Sci., 312,
Springer, Berlin, 2005.



\bibitem{Las01}
J. B. Lasserre. Global optimization with polynomials and the problem of moments.
{\it SIAM J. Optim.}, 11(3): 796-817, 2001.

%
%

\bibitem{LLR08}
J.B.~Lasserre, M.~Laurent, and P.~Rostalski.
Semidefinite characterization and computation of zero-dimensional real radical ideals.
{\it Found. Comput. Math.} 8,  pp. 607--647, 2008.




\bibitem{GloPol3}
D.~Henrion, J.~Lasserre and J.~Loefberg.
{\tt GloptiPoly~3}: moments,
optimization and semidefinite programming.
{\it Optimization Methods and Software}, Vol.~24, Nos. 4-5, pp. 761-779, 2009.

\bibitem{LasBok}
J.B.~Lasserre.
{\it Moments, Positive Polynomials and Their Applications},
Imperial College Press, 2009.


\bibitem{Lau07}
M. Laurent.
Semidefinite representations for finite varieties.
{\it Mathematical Programming}, Vol.~109, pp. 1--26, 2007.


\bibitem{Lau}
M. Laurent.
Sums of squares, moment matrices and optimization over polynomials.
{\it Emerging Applications of Algebraic Geometry, Vol. 149 of
IMA Volumes in Mathematics and its Applications}
(Eds. M. Putinar and S. Sullivant), Springer, pages 157-270, 2009.

%
%

%
%
%


\bibitem{NDS}
J.~Nie, J.~Demmel and B.~Sturmfels.
Minimizing polynomials via sum of squares over the gradient ideal.
{\it Mathematical Programming}, Series A, Vol. 106, No. 3, pp. 587--606, 2006.


\bibitem{NieSwg}
J.Nie and M. Schweighofer.
On the complexity of putinar's positivstellensatz.
{\it Journal of Complexity} \, 23(2007), pp.135-150.

\bibitem{NR09}
J.~Nie and K.~Ranestad.
Algebraic Degree of Polynomial Optimization.
{\it SIAM Journal on Optimization}, Vol. 20, No. 1, pp. 485--502, 2009.


\bibitem{Nie:JacSDP}
J.~Nie.
An exact Jacobian SDP relaxation for polynomial optimization.
{\it Mathematical Programming},  Series A, to appear.

%
%

\bibitem{PS03}
P. A.~Parrilo and B.~Sturmfels.
Minimizing polynomial functions. In S. Basu and L. Gonzalez-Vega, editors,
{\it Algorithmic and Quantitative Aspects of Real Algebraic
Geometry in Mathematics and Computer Science, volume
60 of DIMACS Series in Discrete Mathematics and
Computer Science}, pages 83-99. AMS, 2003.

%
%


\bibitem{Put}
M. Putinar.
Positive polynomials on compact semi-algebraic sets,
{\it Ind. Univ. Math. J.} \, 42 (1993), 969--984.




%



\bibitem{Sch99}
C.~Scheiderer.
Sums of squares of  regular functions on real algebraic varieties.
{\it Trans. Am. Math. Soc.}, \,  352, 1039-1069 (1999).

%
%

\bibitem{Smg}
K. Schm\"{u}dgen.
The K-moment problem for compact semialgebraic sets.
{\it Math. Ann.}  289 (1991), 203--206.



\bibitem{Swg04}
M. Schweighofer.
On the complexity of Schm\"{u}dgen's Positivstellensatz.
{\it Journal of Complexity} 20, 529-543 (2004).



\bibitem{Swg05}
M.~Schweighofer.
Optimization of polynomials on compact semialgebraic sets.
{\it SIAM Journal on Optimization}, Vol.~15, No.~3, pp. 805--825, 2005.


%
%
%


\bibitem{sedumi}
J.F. Sturm. SeDuMi 1.02:a MATLAB toolbox for optimization over
symmetric cones. {\it Optimization Methods and Software}, 11 \& 12
(1999), pp. 625--653.


%
%

%



\end{thebibliography}
\end{document}